\documentclass[11pt]{amsart}

\usepackage[T1]{fontenc}
\usepackage[utf8]{inputenc}
\usepackage{amsmath,amssymb,amsfonts,mathtools}
\usepackage{enumitem}
\usepackage[hidelinks]{hyperref}
\usepackage{mathrsfs,a4wide}

\newtheorem{theorem}{Theorem}[section]
\newtheorem{lemma}[theorem]{Lemma}
\newtheorem{proposition}[theorem]{Proposition}
\newtheorem{corollary}[theorem]{Corollary}

\theoremstyle{definition}
\newtheorem{definition}[theorem]{Definition}
\newtheorem{remark}[theorem]{Remark}
\newtheorem{example}[theorem]{Example}
\newtheorem{problem}[theorem]{Problem}

\newcommand{\join}{\vee}
\newcommand{\meet}{\wedge}
\newcommand{\Pow}{\mathcal{P}}
\newcommand{\op}{\mathrm{op}}
\newcommand{\NOne}{\ensuremath{\mathrm{(N1)}}}
\newcommand{\NTwo}{\ensuremath{\mathrm{(N2)}}}
\newcommand{\NThree}{\ensuremath{\mathrm{(N3)}}}
\newcommand{\NFourJoin}{\ensuremath{\mathrm{(N4_{\vee})}}}
\newcommand{\NFourMeet}{\ensuremath{\mathrm{(N4_{\wedge})}}}

\begin{document}

\title{Approximate homomorphisms\\ on orthomodular lattices}

\author[T.~Kania]{Tomasz Kania}
\address[T.~Kania]{Mathematical Institute\\Czech Academy of Sciences\\\v{Z}itn\'{a} 25\\115 67 Praha 1\\Czech Republic and Institute of Mathematics and Computer Science\\ Jagiellonian University\\ {\L}ojasiewicza 6, 30-348 Krak\'{o}w, Poland}
\email{kania@math.cas.cz, tomasz.marcin.kania@gmail.com}

\thanks{IM CAS (RVO 67985840).}

\subjclass[2020]{06B23, 06C15, 39B82, 81P10}
\keywords{Ulam stability, approximate homomorphism, distributive lattice, modular lattice, orthomodular lattice, Boolean block, neighbourhood system, sandwich theorem, finitely additive state, Kalton--Roberts}

\begin{abstract}
The stability programme initiated by Ulam asks when approximate solutions to algebraic
identities must lie near exact ones.
For lattices, this leads to the question of when a map that nearly preserves joins and meets
can be approximated by a genuine lattice homomorphism.
Badora--Kochanek--Przebieracz developed a neighbourhood-based framework for distributive
lattices, centred on a separation (sandwich) lemma that constructs an exact join homomorphism
between a join-subhomomorphism and a join-superhomomorphism via an order envelope.

We revisit this mechanism and identify the single step at which distributivity is used: a
decomposition identity for elements lying below a join.
Without distributivity, separation can fail already in the modular lattice $M_3$ and in a small
finite orthomodular lattice.
On the positive side, we show that separation holds in arbitrary lattices whenever the lower
bounding map is isotone.

For orthomodular lattices---algebraic models of quantum logic---we develop a blockwise stability
theory on Boolean blocks.
Approximate identities on compatible pairs yield exact homomorphic selections on each block
(for joins, for meets, and for both operations under bi-admissibility).
We present several gluing criteria for assembling blockwise selections, and we give a concrete
finite example showing that gluing can fail when block overlaps are non-trivial.
Finally, in the spirit of Kalton--Roberts, we obtain blockwise approximation results for nearly
additive functions on orthomodular lattices by finitely additive measures, with an illustration
on finite-dimensional projection lattices.
\end{abstract}

\maketitle

\section{Introduction}

The stability programme initiated by Ulam asks when a mathematical object that satisfies
an algebraic law only approximately must lie close to an object satisfying that law exactly.
In the context of lattices, this leads to the following general problem:
given a map that nearly preserves joins and meets, under what conditions can it be
approximated by a genuine lattice homomorphism?

There are two natural and complementary ways to formalise ``approximate preservation''.
One may impose explicit error bounds, often metric in nature, as in classical Ulam--Hyers
type inequalities.
Alternatively, one may replace metric control by a system of neighbourhoods in the codomain
and express approximation through membership in these neighbourhoods.
The latter approach has the advantage of being purely order-theoretic and applies in settings
where no canonical metric is available.

A systematic neighbourhood-based framework for approximate lattice homomorphisms was
developed by Badora, Kochanek, and Przebieracz~\cite{BKP} in the distributive setting.
Their results are built around a separation (or ``sandwich'') principle:
given a join-subhomomorphism $\Phi$ and a join-superhomomorphism $\Psi$ with $\Phi\le \Psi$,
one constructs an exact join homomorphism $F$ lying between them.
The construction proceeds via an order envelope,
\[
F(x)=\sup\{\Phi(z): z\le x\},
\]
and underlies both their error-function stability results and their neighbourhood-selection
theorem.

A classical and largely independent source of motivation comes from the theorem of
Kalton and Roberts~\cite{KR} on nearly additive set functions on Boolean algebras.
That result, together with later refinements~\cite{BPR} and quantitative lower bounds~\cite{GGK},
plays a central role in the stability theory of vector measures and in the structure of
quasi-Banach spaces.
From a conceptual point of view, both the Kalton--Roberts theorem and the BKP separation
principle can be viewed as instances of a common theme: \emph{bounded defect} phenomena,
where approximate algebraic identities can be rectified by exact ones.
This viewpoint is also familiar from the theory of quasi-morphisms in geometric group theory,
where bounded cohomology provides obstructions to stability.

The first objective of the present paper is to analyse the separation mechanism of~\cite{BKP}
from a structural point of view.
A careful examination of the proof shows that distributivity of the domain lattice is used
at exactly one place: to decompose an arbitrary element $z\le x\vee y$ as
\[
z=(z\wedge x)\vee(z\wedge y).
\]
All remaining steps of the argument are purely order-theoretic and rely only on conditional
completeness of the codomain.
This observation naturally raises two questions.
First, does separation necessarily fail outside the distributive category?
Second, can distributivity be replaced by a weaker hypothesis?

We answer both questions.
On the negative side, we show that the separation principle indeed fails without additional
assumptions: explicit counterexamples are constructed already in the five-element modular
lattice $M_3$ and in a small finite orthomodular lattice (see
Propositions~\ref{prop:M3-counterexample} and~\ref{prop:OML-counterexample}).
On the positive side, we identify isotonicity of the lower bounding map $\Phi$ as a sufficient
replacement for distributivity: under the elementary condition
\[
 z\le t \quad\Longrightarrow\quad \Phi(z)\le \Phi(t),
\]
the same envelope construction yields a join homomorphism in \emph{arbitrary} lattices
(Theorem~\ref{thm:sep-isotone}).

The second main theme of the paper concerns orthomodular lattices, which arise naturally
as algebraic models of quantum logic.
These lattices are typically non-distributive, but they possess a rich internal structure:
every orthomodular lattice is the union of its Boolean blocks, i.e.\ its maximal Boolean
subalgebras.
Two elements are compatible precisely when they lie in a common block, and within each block
the lattice operations are fully distributive.
This suggests a \emph{blockwise} approach to stability:
one proves exact homomorphic selection results on each Boolean block and then studies when
such local selections can be assembled into a global map.

We develop this approach in detail.
For join-admissible (respectively meet-admissible) neighbourhood systems, approximate
identities on compatible pairs yield exact join (respectively meet) homomorphisms on each
Boolean block (Theorems~\ref{thm:blockwise-join} and~\ref{thm:blockwise-meet}).
When both joins and meets are approximately preserved and the neighbourhood system is
bi-admissible, we obtain genuine lattice homomorphisms on each block
(Theorem~\ref{thm:blockwise-lattice}).
We then investigate gluing problems for these blockwise selections.
Several positive gluing criteria are established, including a rigid-core condition and an
exact quotient-gluing mechanism for congruence neighbourhoods.
At the same time, we show that gluing can fail in a very concrete way when block overlaps are
non-trivial, even in finite orthomodular lattices
(Example~\ref{ex:glue-failure}).

Finally, we return to the Kalton--Roberts setting.
An approximately additive function on an orthomodular lattice is shown to be close, on each
Boolean block, to a finitely additive measure, with uniform control inherited from the
Boolean case (Theorem~\ref{thm:blockwise-states}).
This yields a natural blockwise extension of the Kalton--Roberts theorem to quantum logics,
illustrated by the projection lattice of a finite-dimensional Hilbert space.

The paper is organised as follows.
Section~2 collects preliminaries.
Section~3 contains the separation lemmas and their isotone variant.
Section~4 presents the modular and orthomodular counterexamples.
Sections~5--7 develop the neighbourhood framework, blockwise selection, and gluing theory.
Section~8 discusses bi-admissible selection and lattice homomorphisms.
Section~9 treats blockwise lattice selection.
Sections~10--11 discuss a partial converse and uniqueness.
Section~12 treats applications to approximate states and the Kalton--Roberts theorem,
Section~13 gives quotient gluing, and Section~14 lists open problems.

\section{Preliminaries}

\subsection{Lattices and conditional completeness}

\begin{definition}
A \emph{lattice} is a set $X$ with binary operations $\join$ and $\meet$ that are associative, commutative, idempotent, and satisfy absorption:
\[
x\join(x\meet y)=x,\qquad x\meet(x\join y)=x.
\]
The induced order is given by $x\le y$ iff $x\join y=y$ (equivalently $x\meet y=x$).
\end{definition}

\begin{definition}
A lattice $Y$ is \emph{conditionally complete} if every nonempty subset that is bounded above has a supremum and every nonempty subset bounded below has an infimum.
A \emph{complete Boolean algebra} is a Boolean algebra whose underlying order is complete.
\end{definition}

\subsection{Distributivity}

\begin{definition}
A lattice $X$ is \emph{distributive} if
\[
z\meet(x\join y)=(z\meet x)\join(z\meet y)\qquad(x,y,z\in X).
\]
\end{definition}

A basic consequence is that whenever $z\le x\join y$ in a distributive lattice, one has
\begin{equation}\label{eq:decomp}
z=z\meet(x\join y)=(z\meet x)\join(z\meet y).
\end{equation}

\begin{definition}\label{def:inf-distrib}
Let $Y$ be a conditionally complete lattice.
We say that $Y$ is \emph{infinitarily distributive} if, for every $y\in Y$ and every nonempty subset $A\subseteq Y$ for which the relevant suprema or infima exist, the following identities hold:
\[
y\meet\sup A=\sup\{y\meet a:a\in A\},
\qquad
 y\join\inf A=\inf\{y\join a:a\in A\}.
\]
Equivalently, finite meets distribute over existing nonempty bounded joins, and finite joins distribute over existing nonempty bounded meets.
This is the infinitary distributivity assumption used below in the applications of the neighbourhood-selection theorem of~\cite{BKP}.
\end{definition}

\subsection{Orthomodular lattices, blocks, compatibility}

\begin{definition}
An \emph{ortholattice} is a bounded lattice $(L,\join,\meet,0,1)$ with an order-reversing involution $x\mapsto x^\perp$
satisfying $x\join x^\perp=1$ and $x\meet x^\perp=0$.
An ortholattice is \emph{orthomodular} if $x\le y$ implies $y=x\join(y\meet x^\perp)$.
\end{definition}

\begin{definition}
A \emph{Boolean block} (or \emph{block}) in an orthomodular lattice is a maximal Boolean subalgebra.
Elements $x,y$ are \emph{compatible} if they lie in a common block.
They are \emph{orthogonal}, written $x\perp y$, if $x\le y^\perp$.
\end{definition}

\section{Separation lemmas}

\subsection{Join-sub/superhomomorphisms}
In order to formulate the separation principle, we distinguish between maps that preserve
joins exactly and those that preserve them only in one direction.
These notions capture the idea of approximate join preservation in a purely order-theoretic form.

\begin{definition}
Let $X,Y$ be lattices.
A map $F\colon X\to Y$ is a \emph{join homomorphism} if $F(x\join y)=F(x)\join F(y)$ for all $x,y\in X$.
A map $\Phi\colon X\to Y$ is a \emph{join-subhomomorphism} if $\Phi(x\join y)\le \Phi(x)\join\Phi(y)$.
A map $\Psi$ is a \emph{join-superhomomorphism} if $\Psi(x\join y)\ge \Psi(x)\join\Psi(y)$.
\end{definition}
The separation problem can now be stated as follows: given a join-subhomomorphism lying below
a join-superhomomorphism, can one interpolate a genuine join homomorphism between them?
In distributive lattices, this question has a positive answer.

\subsection{The distributive join separation lemma}
We begin with the separation result of Badora, Kochanek, and Przebieracz, which lies at the
heart of the neighbourhood-based stability theory for distributive lattices.
For completeness and to isolate the role of distributivity, we include a full proof.

\begin{theorem}[Join separation on distributive lattices {\cite[Lemma~6]{BKP}}]\label{thm:join-sep-distrib}
Let $X$ be a distributive lattice and $Y$ a conditionally complete lattice.
Suppose $\Phi,\Psi\colon X\to Y$ satisfy $\Phi\le \Psi$, with $\Phi$ join-subhomomorphic and $\Psi$ join-superhomomorphic.
Then
\[
F(x):=\sup\{\Phi(z): z\le x\}\qquad(x\in X)
\]
defines a join homomorphism $F\colon X\to Y$ such that $\Phi\le F\le \Psi$.
\end{theorem}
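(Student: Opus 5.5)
We plan to verify three things in turn: that $F$ is well defined, that $\Phi\le F\le\Psi$, and that $F$ preserves joins.

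\emph{Well-definedness.} For fixed $x$, the set $S_x:=\{\Phi(z):z\le x\}$ is nonempty, since it contains $\Phi(x)$. It is also bounded above. To see this we first show that $\Psi$ is isotone: if $z\le x$, then $x=z\join x$, and superhomomorphy gives
\[
\Psi(x)=\Psi(z\join x)\ge\Psi(z)\join\Psi(x)\ge\Psi(z).
\]
Hence $\Phi(z)\le\Psi(z)\le\Psi(x)$ for every $z\le x$, so $\Psi(x)$ is an upper bound of $S_x$. Conditional completeness of $Y$ then gives the supremum $F(x)$. The same computation gives the sandwich: $\Phi(x)\in S_x$ yields $\Phi(x)\le F(x)$, and $\Psi(x)$ being an upper bound yields $F(x)\le\Psi(x)$.

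\emph{Join preservation.} Isotonicity of $F$ is immediate, because $x\le x'$ implies $S_x\subseteq S_{x'}$. Therefore
\[
F(x)\join F(y)\le F(x\join y).
\]
For the reverse inequality it suffices to show that $\Phi(z)\le F(x)\join F(y)$ for every $z\le x\join y$; taking the supremum over such $z$ then finishes the argument.

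This is the step we expect to be the main obstacle, and the only place where distributivity of $X$ enters. Given $z\le x\join y$, we use the decomposition \eqref{eq:decomp} to write $z=(z\meet x)\join(z\meet y)$. Since $\Phi$ is join-subhomomorphic,
\[
\Phi(z)\le\Phi(z\meet x)\join\Phi(z\meet y).
\]
Here $z\meet x\le x$ and $z\meet y\le y$, so $\Phi(z\meet x)\le F(x)$ and $\Phi(z\meet y)\le F(y)$. Consequently $\Phi(z)\le F(x)\join F(y)$. Taking the supremum over $z\le x\join y$ gives $F(x\join y)\le F(x)\join F(y)$, and hence equality.

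All remaining steps use only the order structure of $Y$ and existence of suprema. This isolates the decomposition identity as the single point where the hypothesis on $X$ is needed, which is the point the paper's later discussion focuses on.
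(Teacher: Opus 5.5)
Your proposal is correct and follows the paper's proof essentially step for step. Both proofs bound $\{\Phi(z):z\le x\}$ by $\Psi(x)$ using that $\Psi$ is isotone, derive $F(x)\join F(y)\le F(x\join y)$ from isotonicity of $F$, and get the reverse inequality from the decomposition $z=(z\meet x)\join(z\meet y)$. You also note explicitly that the set is nonempty, which conditional completeness requires and the paper leaves implicit.
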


\begin{proof}
Fix $x\in X$.
Whenever $z\le x$, we have $\Phi(z)\le \Psi(z)$ and also $\Psi(z)\le \Psi(x)$, because $\Psi$ is join-superhomomorphic and $x=z\join x$ implies
\[
\Psi(x)=\Psi(z\join x)\ge \Psi(z)\join\Psi(x),
\]
hence $\Psi(z)\le \Psi(x)$.
Thus $\Phi(z)\le \Psi(x)$ for all $z\le x$, so the set $\{\Phi(z): z\le x\}$ is bounded above and its supremum exists.
The definition of $F$ therefore makes sense and satisfies $\Phi(x)\le F(x)\le \Psi(x)$.

The map $F$ is isotone: if $x\le y$ then $\{z:z\le x\}\subseteq\{z:z\le y\}$, so $F(x)\le F(y)$.
In particular, $F(x)\le F(x\join y)$ and $F(y)\le F(x\join y)$, whence
\begin{equation}\label{eq:easy}
F(x)\join F(y)\le F(x\join y).
\end{equation}

For the reverse inequality, fix $x,y\in X$ and consider any $z\le x\join y$.
Distributivity gives the decomposition \eqref{eq:decomp}:
$z=(z\meet x)\join(z\meet y)$.
Using join-subhomomorphy of $\Phi$,
\[
\Phi(z)\le \Phi(z\meet x)\join \Phi(z\meet y).
\]
Since $z\meet x\le x$ and $z\meet y\le y$, the definition of $F$ gives
$\Phi(z\meet x)\le F(x)$ and $\Phi(z\meet y)\le F(y)$.
Hence $\Phi(z)\le F(x)\join F(y)$ for every $z\le x\join y$.
Taking the supremum over $z\le x\join y$ gives $F(x\join y)\le F(x)\join F(y)$, which together with \eqref{eq:easy} yields
$F(x\join y)=F(x)\join F(y)$.
\end{proof}

\begin{remark}\label{rem:where-distrib}
In the proof of Theorem~\ref{thm:join-sep-distrib}, distributivity enters only through the identity $z=(z\meet x)\join(z\meet y)$ for $z\le x\join y$.
Everything else is purely lattice-theoretic and uses only conditional completeness of $Y$.
\end{remark}

\subsection{The dual meet separation lemma}
The previous subsection showed how a join-subhomomorphism lying below a join-superhomomorphism
can be separated by a join homomorphism.
We now record the dual statement for meets.

\begin{definition}
A map $\Psi\colon X\to Y$ is a \emph{meet-subhomomorphism} if $\Psi(x\meet y)\le \Psi(x)\meet\Psi(y)$.
A map $\Phi$ is a \emph{meet-superhomomorphism} if $\Phi(x\meet y)\ge \Phi(x)\meet\Phi(y)$.
\end{definition}

\begin{theorem}\label{thm:meet-sep-distrib}
Let $X$ be a distributive lattice and $Y$ a conditionally complete lattice.
Suppose $\Psi,\Phi\colon X\to Y$ satisfy $\Psi\le \Phi$, with $\Psi$ meet-subhomomorphic and $\Phi$ meet-superhomomorphic.
Then
\[
G(x):=\inf\{\Phi(z): z\ge x\}\qquad(x\in X)
\]
defines a meet homomorphism $G\colon X\to Y$ such that $\Psi\le G\le \Phi$.
\end{theorem}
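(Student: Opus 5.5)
The natural route is order duality: I would deduce the statement from Theorem~\ref{thm:join-sep-distrib} rather than redo the argument. Passing to the dual lattices $X^{\op}$ and $Y^{\op}$ exchanges joins and meets and reverses the order. The dual of a distributive lattice is distributive, since the two distributive laws are equivalent, and conditional completeness of $Y$ is self-dual. Under this passage, $\Phi$ meet-superhomomorphic on $X\to Y$ becomes join-subhomomorphic on $X^{\op}\to Y^{\op}$. Likewise $\Psi$ meet-subhomomorphic becomes join-superhomomorphic, and $\Psi\le\Phi$ in $Y$ becomes $\Phi\le\Psi$ in $Y^{\op}$. Applying Theorem~\ref{thm:join-sep-distrib}, the envelope $\sup_{Y^{\op}}\{\Phi(z): z\le_{\op} x\}$ is exactly $\inf_Y\{\Phi(z): z\ge x\}=G(x)$. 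It is a join homomorphism of the duals, that is, a meet homomorphism $X\to Y$, with $\Psi\le G\le\Phi$.

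To keep the paper self-contained and to show where distributivity enters, I would also spell out the direct argument, mirroring the earlier proof. The steps are as follows.

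\begin{enumerate}[label=(\roman*)]
\item \emph{Well-definedness.} For $z\ge x$, meet-subhomomorphy gives $\Psi(x)=\Psi(x\meet z)\le\Psi(x)\meet\Psi(z)$, so $\Psi(x)\le\Psi(z)\le\Phi(z)$. Hence $\{\Phi(z): z\ge x\}$ is nonempty (it contains $z=x$) and bounded below by $\Psi(x)$, so the infimum exists and $\Psi(x)\le G(x)\le\Phi(x)$.
\item \emph{Isotonicity.} $G$ is isotone because $x\le y$ shrinks the index set $\{z: z\ge y\}\subseteq\{z: z\ge x\}$. It follows that $G(x\meet y)\le G(x)\meet G(y)$.
\item \emph{Reverse inequality.} Take any $z\ge x\meet y$. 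Dual distributivity gives $z=z\join(x\meet y)=(z\join x)\meet(z\join y)$. Meet-superhomomorphy of $\Phi$ then yields $\Phi(z)\ge\Phi(z\join x)\meet\Phi(z\join y)\ge G(x)\meet G(y)$, since $z\join x\ge x$ and $z\join y\ge y$. Taking the infimum over $z$ gives $G(x\meet y)\ge G(x)\meet G(y)$.
\end{enumerate}

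The only point requiring care is step (iii): the decomposition $z=(z\join x)\meet(z\join y)$ for $z\ge x\meet y$. This needs the dual distributive law $z\join(x\meet y)=(z\join x)\meet(z\join y)$. It is a standard fact that in lattices this law is equivalent to the form stated in the definition, and I would cite or briefly verify it. Everything else is routine dualization.
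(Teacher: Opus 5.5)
Your proposal is correct and is essentially the paper's proof. The paper simply applies Theorem~\ref{thm:join-sep-distrib} to $X^{\op}$ and $Y^{\op}$, exactly as in your first paragraph; you additionally make explicit that distributivity and conditional completeness are self-dual, which the paper leaves implicit, and your direct verification in steps (i)--(iii) is also correct but is a redundant supplement rather than a different route.
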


\begin{proof}
Apply Theorem~\ref{thm:join-sep-distrib} to the order-dual lattices $X^{\op}$ and $Y^{\op}$.
The displayed formula is precisely the dual envelope, and the inequalities translate back to $\Psi\le G\le\Phi$.
\end{proof}

\subsection{Isotone separation without distributivity}
In Theorem~\ref{thm:join-sep-distrib} the only use of distributivity is the identity
$z=(z\meet x)\join(z\meet y)$ for $z\le x\join y$.
Outside the distributive setting this identity fails, but the argument still works if the lower bounding map is isotone.

\begin{theorem}[Join separation in arbitrary lattices under isotonicity]\label{thm:sep-isotone}
Let $X$ be a lattice and $Y$ a conditionally complete lattice.
Suppose $\Phi,\Psi\colon X\to Y$ satisfy $\Phi\le \Psi$, with $\Phi$ join-subhomomorphic and $\Psi$ join-superhomomorphic.
If
\[
 z\le t \quad\Longrightarrow\quad \Phi(z)\le \Phi(t),
\]
then the envelope $F(x)=\sup_{z\le x}\Phi(z)$ is a join homomorphism and satisfies $\Phi\le F\le \Psi$.
\end{theorem}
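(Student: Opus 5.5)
The plan is to reuse the proof of Theorem~\ref{thm:join-sep-distrib} verbatim wherever it does not use distributivity (Remark~\ref{rem:where-distrib}), and to replace the single decomposition step with an argument based on isotonicity.

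First, exactly as before, join-superhomomorphy of $\Psi$ gives $\Psi(z)\le\Psi(x)$ whenever $z\le x$. Hence $\Phi(z)\le\Psi(z)\le\Psi(x)$ for all $z\le x$. The set $\{\Phi(z):z\le x\}$ is nonempty, since it contains $\Phi(x)$, and bounded above by $\Psi(x)$. Conditional completeness of $Y$ therefore gives the supremum, so $F$ is well defined and satisfies $\Phi\le F\le\Psi$. Isotonicity of $F$ follows from the inclusion of index sets, and it yields $F(x)\join F(y)\le F(x\join y)$ as in \eqref{eq:easy}.

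For the reverse inequality, I observe that isotonicity of $\Phi$ collapses the envelope. If $z\le x$ then $\Phi(z)\le\Phi(x)$, so $\Phi(x)$ is the largest element of $\{\Phi(z):z\le x\}$ and $F(x)=\Phi(x)$ for every $x$. Then
\[
F(x\join y)=\Phi(x\join y)\le\Phi(x)\join\Phi(y)=F(x)\join F(y)
\]
by join-subhomomorphy of $\Phi$. Alternatively, to stay close to the original proof, one can bound each $\Phi(z)$ with $z\le x\join y$ by $\Phi(x\join y)\le\Phi(x)\join\Phi(y)\le F(x)\join F(y)$ and take the supremum. Either route replaces the decomposition $z=(z\meet x)\join(z\meet y)$ by the single comparison $z\le x\join y$.

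The main thing to check is that nothing else relied on the decomposition. The only delicate point is recognising that isotonicity lets us compare $z$ with $x\join y$ directly instead of splitting $z$. Once that is seen, no step is a genuine obstacle. I would also remark that the conclusion $F=\Phi$ shows that, in this setting, $\Phi$ itself is already a join homomorphism: an isotone join-subhomomorphism is automatically a join homomorphism, since $\Phi(x)\join\Phi(y)\le\Phi(x\join y)$ by isotonicity.
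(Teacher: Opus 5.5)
Your proof is correct. The route you list as an alternative, bounding each $\Phi(z)$ with $z\le x\join y$ by $\Phi(x\join y)\le\Phi(x)\join\Phi(y)\le F(x)\join F(y)$ and then taking the supremum, is exactly the paper's argument.

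Your main route records something sharper that the paper does not state. Isotonicity makes $\Phi(x)$ the maximum of $\{\Phi(z):z\le x\}$, so $F=\Phi$. The theorem therefore reduces to the fact that an isotone join-subhomomorphism is already a join homomorphism. Several parts of the setup then play no real role:
\begin{itemize}
\item the envelope construction is not needed;
\item conditional completeness of $Y$ is not needed, since the supremum is attained as a maximum;
\item join-superhomomorphy of $\Psi$ is not needed, because $F\le\Psi$ follows directly from $\Phi\le\Psi$.
\end{itemize}

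The paper's presentation mirrors the proof of Theorem~\ref{thm:join-sep-distrib} step by step. This isolates the single point where the comparison $z\le x\join y$, via isotonicity, replaces the decomposition $z=(z\meet x)\join(z\meet y)$. Yours shows the structural point: under this hypothesis no genuine interpolation takes place, because $\Phi$ itself is the separating homomorphism.
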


\begin{proof}
The definition of $F$ makes sense for the same reason as in Theorem~\ref{thm:join-sep-distrib}: for fixed $x$, the inequalities
$\Phi(z)\le \Psi(z)\le \Psi(x)$ for $z\le x$ show that $\{\Phi(z):z\le x\}$ is bounded above.
As before, one has $\Phi\le F\le \Psi$ and $F$ is isotone.

The inequality $F(x)\join F(y)\le F(x\join y)$ follows from isotonicity of $F$ exactly as in \eqref{eq:easy}.
For the reverse inequality, let $z\le x\join y$.
The isotonicity of $\Phi$ gives $\Phi(z)\le \Phi(x\join y)$, and join-subhomomorphy gives
\[
\Phi(x\join y)\le \Phi(x)\join\Phi(y)\le F(x)\join F(y).
\]
Thus $\Phi(z)\le F(x)\join F(y)$ for all $z\le x\join y$, and taking suprema over such $z$ yields $F(x\join y)\le F(x)\join F(y)$.
\end{proof}

\begin{remark}\label{rem:isotone-simple}
The hypothesis in Theorem~\ref{thm:sep-isotone} is exactly the standard isotonicity condition
$z\le t\Rightarrow \Phi(z)\le\Phi(t)$, and we use this simple formulation throughout.
The envelope
\[
F(x)=\sup\{\Phi(z):z\le x\}
\]
is isotone even when $\Phi$ is not: if $x\le y$ then $\{z:z\le x\}\subseteq\{z:z\le y\}$.
\end{remark}

\section{Obstructions: modular and orthomodular counterexamples}

\subsection{A finite orthomodular counterexample}

\begin{proposition}[Orthomodular obstruction]\label{prop:OML-counterexample}
There exist a finite orthomodular lattice $L$, the two-element Boolean algebra $Y$, and maps $\Phi,\Psi\colon L\to Y$ with $\Phi\le\Psi$
such that $\Phi$ is join-subhomomorphic and $\Psi$ join-superhomomorphic, but there is no join homomorphism $F\colon L\to Y$ with $\Phi\le F\le\Psi$.
\end{proposition}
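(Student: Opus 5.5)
We will take $L=\mathrm{MO}_2$, the horizontal sum of two four-element Boolean algebras, and $Y=\{0,1\}$, and choose $\Phi$ and $\Psi$ so that the decomposition identity \eqref{eq:decomp} is exactly what breaks. The elements of $L$ are $0,a,a^\perp,b,b^\perp,1$. The only nontrivial relations are $0<x<1$ for each of the four atoms $x$, and any two distinct atoms $x\neq y$ satisfy $x\join y=1$ and $x\meet y=0$. This is a standard finite orthomodular lattice. Its blocks are $\{0,a,a^\perp,1\}$ and $\{0,b,b^\perp,1\}$, so $a$ and $b$ are incompatible. The relevant failure of \eqref{eq:decomp} is $a^\perp\le a\join b$, while $(a^\perp\meet a)\join(a^\perp\meet b)=0$.

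Before choosing the maps, we translate the hypotheses into set conditions, since $Y$ has two elements.
\begin{itemize}[leftmargin=*]
\item A map $\Psi\colon L\to Y$ is join-superhomomorphic iff it is isotone, because $\Psi(x\join y)\ge\Psi(x)$ for all $x,y$ is the same as isotonicity. Equivalently, $\{\Psi=0\}$ is a down-set.
\item A map $\Phi$ is join-subhomomorphic iff $\{\Phi=0\}$ is closed under binary joins.
\item A join homomorphism $F\colon L\to Y$ is isotone, and $\{F=0\}$ is closed under joins.
\item $\Phi\le F\le\Psi$ means $\{\Psi=0\}\subseteq\{F=0\}\subseteq\{\Phi=0\}$.
\end{itemize}
The obstruction will be that the join-closure of $\{\Psi=0\}$ is forced to contain $1$, whereas $\{\Phi=0\}$ is join-closed but not a down-set.

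Concretely, we define
\[
\Psi(0)=\Psi(a)=\Psi(b)=0,\quad \Psi(a^\perp)=\Psi(b^\perp)=\Psi(1)=1,
\]
\[
\Phi(a^\perp)=1,\quad \Phi(x)=0\ \text{ for } x\neq a^\perp .
\]
Then we verify the hypotheses:
\begin{itemize}[leftmargin=*]
\item $\Psi$ is isotone, since $\{0,a,b\}$ is a down-set. Hence $\Psi$ is join-superhomomorphic.
\item $\{\Phi=0\}=\{0,a,b,b^\perp,1\}$ is join-closed: any join of two distinct elements of this set is either one of them (when one is $0$ or $1$) or $1$ (two distinct atoms). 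Hence $\Phi$ is join-subhomomorphic.
\item $\Phi\le\Psi$ holds, because the only point with $\Phi=1$ is $a^\perp$, and $\Psi(a^\perp)=1$.
\end{itemize}

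Finally, suppose $F$ were a join homomorphism with $\Phi\le F\le\Psi$. From $F\le\Psi$ we get $F(a)=F(b)=0$. Then $F(1)=F(a\join b)=F(a)\join F(b)=0$. But $F$ is isotone, so $F(a^\perp)\le F(1)=0$, whereas $F(a^\perp)\ge\Phi(a^\perp)=1$. This contradiction proves the proposition.

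The only real difficulty is finding the right pair $(\Phi,\Psi)$, and the set-theoretic reformulation above makes that search routine. As a sanity check, the envelope of Theorem~\ref{thm:join-sep-distrib} gives $F(a)=F(b)=0$ and $F(1)=1$, so it fails to be a join homomorphism, precisely because $a^\perp\le a\join b$ does not decompose.
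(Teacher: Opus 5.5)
Your proof is correct and is essentially the paper's argument. You use the same lattice $\mathrm{MO}_2$ and codomain $\{0,1\}$; $\Psi$ forces $F$ to vanish on two atoms whose join is $1$, $\Phi$ forces $F=1$ at another atom, and isotonicity of $F$ gives the contradiction. The only difference is which pair of atoms is made to vanish. The paper uses the complementary (hence compatible) atoms $p,q$ with $p\vee q=1$, so its obstruction survives even for maps that only preserve joins of compatible pairs. Your pair gives the incompatible join $a\vee b=1$: the map that is $0$ on $\{0,a,b\}$ and $1$ elsewhere lies between your $\Phi$ and $\Psi$ and preserves all compatible joins.
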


\begin{proof}
Let $L$ be the horizontal sum of the two four-element Boolean algebras
\[
B_1=\{0,p,q,1\},\qquad B_2=\{0,r,s,1\},
\]
with $p^\perp=q$ and $r^\perp=s$.
Thus distinct atoms from different blocks have join $1$ and meet $0$.
Let $Y=\{0<1\}$ and define
\[
\begin{array}{c|cccccc}
x&0&p&q&r&s&1\\ \hline
\Phi(x)&0&0&0&1&0&0\\
\Psi(x)&0&0&0&1&0&1.
\end{array}
\]
The only nontrivial joins in $L$ are joins of distinct atoms, and these immediately give
$\Phi(u\join v)\le\Phi(u)\join\Phi(v)$ and
$\Psi(u)\join\Psi(v)\le\Psi(u\join v)$; hence $\Phi$ is join-subhomomorphic and $\Psi$ is join-superhomomorphic.

If $F$ were a join homomorphism with $\Phi\le F\le\Psi$, then $F(r)=1$ and therefore $F(1)=1$.
However, in the block $B_1$ we have $1=p\join q$, while the bounds force $F(p)=F(q)=0$.
Thus
\[
1=F(1)=F(p\join q)=F(p)\join F(q)=0,
\]
a contradiction.
\end{proof}

\subsection{Failure already in \texorpdfstring{$M_3$}{M3}}
The isotonicity assumption in Theorem~\ref{thm:sep-isotone} is essential.
Without either distributivity or isotonicity, the separation mechanism collapses,
and this failure occurs already in the five-element modular lattice $M_3$.

\begin{proposition}\label{prop:M3-counterexample}
Let $X=M_3=\{0,1,a,b,c\}$ be the five-element modular lattice and let $Y=\{0<1\}$.
There exist maps $\Phi,\Psi\colon X\to Y$ with $\Phi\le\Psi$ such that $\Phi$ is join-subhomomorphic and $\Psi$ join-superhomomorphic,
but no join homomorphism $F\colon X\to Y$ satisfies $\Phi\le F\le \Psi$.
\end{proposition}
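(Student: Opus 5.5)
The plan is to transplant the horizontal-sum counterexample of Proposition~\ref{prop:OML-counterexample} into $M_3$. The key feature is the same: $M_3$ has three atoms $a,b,c$, any two distinct atoms join to $1$, and in particular $1=b\join c$ is a decomposition of the top that avoids $a$. So I would mark one atom as ``forced up'' by $\Phi$ and the other two as ``forced down'' by $\Psi$.

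Concretely, I would define
\[
\begin{array}{c|ccccc}
x&0&a&b&c&1\\ \hline
\Phi(x)&0&1&0&0&0\\
\Psi(x)&0&1&0&0&1,
\end{array}
\]
so that $\Phi\le\Psi$ pointwise.

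The routine verification then splits into two kinds of pairs $\{x,y\}$.

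For \emph{comparable} pairs $x\le y$ we have $x\join y=y$. The subhomomorphism inequality $\Phi(y)\le\Phi(x)\join\Phi(y)$ is then trivial. The superhomomorphism inequality reduces to $\Psi(x)\le\Psi(y)$, which holds because $\Psi$ is isotone: $\Psi(0)=0$ and $\Psi(1)=1$.

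For \emph{distinct atoms} $u\neq v$ we have $u\join v=1$. Here $\Phi(1)=0$ makes subhomomorphy automatic, and $\Psi(1)=1$ makes superhomomorphy automatic.

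These two cases exhaust all pairs, so $\Phi$ is join-subhomomorphic and $\Psi$ is join-superhomomorphic.

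For the non-existence of $F$, suppose $F$ is a join homomorphism with $\Phi\le F\le\Psi$. The bounds force $F(a)=1$ and $F(b)=F(c)=0$. A join homomorphism is isotone, so $F(1)=F(a\join 1)=F(a)\join F(1)\ge F(a)=1$. On the other hand,
\[
F(1)=F(b\join c)=F(b)\join F(c)=0,
\]
which is a contradiction.

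I would close by remarking that $\Phi$ is not isotone, since $\Phi(a)=1>\Phi(1)=0$. This is consistent with Theorem~\ref{thm:sep-isotone} and shows that the isotonicity hypothesis there cannot simply be dropped. There is no real obstacle in this argument; the only step needing care is checking that the case split covers every pair in $M_3$.
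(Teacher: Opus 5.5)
Your proposal is correct and is the paper's own construction with the distinguished atom relabelled ($a$ in place of $c$); the contradiction between $F(1)\ge F(a)=1$ and $F(1)=F(b\join c)=0$ is exactly the paper's argument. Your explicit check of the comparable pairs also makes the verification more complete than the paper's brief appeal to the atom-join relations.
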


\begin{proof}
The three atoms of $M_3$ satisfy $a\join b=a\join c=b\join c=1$.
Set
\[
\begin{array}{c|ccccc}
x&0&a&b&c&1\\ \hline
\Phi(x)&0&0&0&1&0\\
\Psi(x)&0&0&0&1&1.
\end{array}
\]
The atom-join relations show, as in Proposition~\ref{prop:OML-counterexample}, that $\Phi$ is join-subhomomorphic and $\Psi$ is join-superhomomorphic.
If $F$ were a join homomorphism with $\Phi\le F\le\Psi$, then $F(c)=1$, so $F(1)=1$.
But $1=a\join b$, whereas the bounds force $F(a)=F(b)=0$, a contradiction.
\end{proof}

\section{Neighbourhood systems}

\subsection{Join-, meet-, and bi-admissibility}
To formalise neighbourhood-based approximation, we recall the admissibility axioms
introduced in~\cite{BKP}.
Throughout the paper, a neighbourhood system $N$ is fixed globally on the codomain lattice $Y$; the same map $N\colon Y\to\Pow(Y)$ is used for all restrictions to Boolean blocks and for all gluing arguments.
These axioms are designed to ensure that approximate join or meet identities can be propagated through the lattice operations and converted into exact homomorphic selections.

\begin{definition}[Neighbourhood system]\label{def:neigh}
Let $Y$ be a lattice.
A map $N\colon Y\to\Pow(Y)$ assigns to each $y\in Y$ a nonempty \emph{bounded} subset $N(y)\subseteq Y$.
We say that $N$ is \emph{join-admissible} if for all $y,z,t,u\in Y$ the following hold:
\begin{enumerate}[label=\textup{(N\arabic*)},ref=N\arabic*]
\item\label{N1} $y\in N(y)$;
\item\label{N2} (\emph{convexity}) if $t,u\in N(z)$ and $t\le y\le u$, then $y\in N(z)$;
\item\label{N3} $\inf N(y)\in N(y)$ and $\sup N(y)\in N(y)$;
\item[\NFourJoin] (\emph{join stability}) if $t\in N(u)$ and $u\join y\in N(z)$, then $t\join y\in N(z)$.
\end{enumerate}
We say that $N$ is \emph{meet-admissible} if \NOne--\NThree\ hold and
\begin{enumerate}[label=\NFourMeet]
\item (\emph{meet stability}) if $t\in N(u)$ and $u\meet y\in N(z)$, then $t\meet y\in N(z)$.
\end{enumerate}
If $N$ is both join- and meet-admissible, we call it \emph{bi-admissible}.
\end{definition}

\subsection{Examples and a clarifying non-example}
The following examples show that the admissibility axioms are neither artificial nor
vacuous: they are satisfied by several natural constructions arising in lattice theory,
while at the same time excluding some seemingly reasonable but ultimately incompatible
choices.

\begin{example}[Principal ideals and filters are bi-admissible]\label{ex:ideals}
Assume that $Y$ has a least element $0$.
If $N_\downarrow(z):=\{y\in Y: y\le z\}$, then $N_\downarrow$ is bi-admissible.
Dually, if $Y$ has a greatest element $1$ and $N_\uparrow(z):=\{y\in Y: y\ge z\}$, then $N_\uparrow$ is bi-admissible.
\end{example}

\begin{proof}
We treat $N_\downarrow$.
The properties \NOne--\NThree\ are immediate.
For join stability \NFourJoin, assume $t\le u$ and $u\join y\le z$.
Then $t\join y\le u\join y\le z$, and so $t\join y\in N_\downarrow(z)$.
For meet stability \NFourMeet, assume $t\le u$ and $u\meet y\le z$.
Then $t\meet y\le u\meet y\le z$, so $t\meet y\in N_\downarrow(z)$.
\end{proof}

\begin{remark}[Rigid cores for principal ideals]\label{rem:rigid-ideal}
If $N=N_\downarrow$ is a principal-ideal system, then $N(f(x))$ is a singleton if and only if $f(x)=0$.
Consequently, the rigid-core condition ``$N(f(0))$ and $N(f(1))$ are singletons'' forces $f(0)=f(1)=0$.
This illustrates that rigid-core gluing is natural for congruence-type neighbourhoods (where many singletons may occur),
but can be restrictive for ideal-type neighbourhoods.
\end{remark}

\begin{example}[Congruence classes]\label{ex:congruence}
If $\Theta$ is a lattice congruence on $Y$ and $N_\Theta(y):=[y]_\Theta$, then $N_\Theta$ is bi-admissible; see \cite[Example~9(d)]{BKP}.
\end{example}

\begin{example}[Metric balls typically fail admissibility]\label{ex:metric-balls}
Let $Y=\mathbb{R}$ with $\join=\max$ and $\meet=\min$, and let $N_\varepsilon(z)=[z-\varepsilon,z+\varepsilon]$.
Then $N_\varepsilon$ satisfies \NOne--\NThree, but for every $\varepsilon>0$ it fails join stability \NFourJoin\ and meet stability \NFourMeet.
\end{example}

\begin{proof}
Fix $\varepsilon>0$ and take $z=0$, $u=\varepsilon$, $t=2\varepsilon$, $y=-1$.
Then $t\in N_\varepsilon(u)$ and $u\join y=\max\{\varepsilon,-1\}=\varepsilon\in N_\varepsilon(0)$,
but $t\join y=\max\{2\varepsilon,-1\}=2\varepsilon\notin[-\varepsilon,\varepsilon]=N_\varepsilon(0)$.
The meet case is dual.
\end{proof}

\begin{remark}
Although metric balls typically fail join-/meet-admissibility in the strict sense of
Definition~\ref{def:neigh}, they often satisfy a weaker, radius-inflating transport estimate.
For instance, in Example~\ref{ex:metric-balls} we have, for every $\varepsilon>0$ and $u,v\in\mathbb{R}$,
\[
N_\varepsilon(u)\ \join\ N_\varepsilon(v)\ \subseteq\ N_{2\varepsilon}(u\join v),
\qquad
N_\varepsilon(u)\ \meet\ N_\varepsilon(v)\ \subseteq\ N_{2\varepsilon}(u\meet v),
\]
where $N_\varepsilon(t)=[t-\varepsilon,t+\varepsilon]$ and $\join=\max$, $\meet=\min$.
Indeed, $\max$ and $\min$ are $1$-Lipschitz in each variable, so for
$u'\in N_\varepsilon(u)$ and $v'\in N_\varepsilon(v)$ one has
\[
\bigl|\max\{u',v'\}-\max\{u,v\}\bigr|\le |u'-u|+|v'-v|\le 2\varepsilon,
\]
and similarly for $\min$.
This highlights that the admissibility axioms are tuned to enable \emph{exact} algebraic
selection (true homomorphisms), whereas metric balls naturally lead to \emph{bounded-defect}
(quasi-)homomorphisms.
\end{remark}

\begin{remark}\label{rem:metric-vs-neigh}
Example~\ref{ex:metric-balls} explains why the neighbourhood formalism in \cite{BKP} does not coincide with naive metric-ball control.
If one wishes to work with metric balls, the error-function formulation of \cite[Theorem~7]{BKP} is the more natural framework,
and is, in practice, much closer to standard metric stability assumptions.
\end{remark}

\section{Orthomodular blockwise stability and gluing}

\subsection{Blockwise BKP selection for joins and meets}
In orthomodular lattices, distributivity is available only locally, on Boolean blocks.
The natural stability question is therefore not whether an approximate homomorphism admits
a global selection, but whether it admits exact homomorphic selections on each block.
The next result shows that join-admissibility is sufficient for such blockwise selections.

\begin{theorem}[Blockwise join selection on orthomodular lattices]\label{thm:blockwise-join}
Let $L$ be an orthomodular lattice and let $Y$ be an infinitarily distributive conditionally complete lattice in the sense of Definition~\ref{def:inf-distrib}.
Let $N\colon Y\to\Pow(Y)$ be a globally fixed join-admissible neighbourhood system.
If $f\colon L\to Y$ satisfies
\[
f(x)\join f(y)\in N\bigl(f(x\join y)\bigr)
\]
whenever $x$ and $y$ are compatible, then for every Boolean block $C\subseteq L$ there exists a join homomorphism
$F_C\colon C\to Y$ with $F_C(x)\in N(f(x))$ for all $x\in C$.
\end{theorem}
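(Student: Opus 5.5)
The plan is to reduce to the distributive neighbourhood-selection mechanism of \cite{BKP}, run entirely inside a single block. Fix a Boolean block $C\subseteq L$. Since $C$ is a Boolean subalgebra, joins in $C$ coincide with joins in $L$, and any two elements of $C$ are compatible. Hence the restriction $f|_C$ satisfies $f(x)\join f(y)\in N(f(x\join y))$ for \emph{all} $x,y\in C$, and $C$ is a distributive lattice. The statement therefore follows once we prove the distributive case: for a distributive lattice $X$ and $f\colon X\to Y$ with $f(x)\join f(y)\in N(f(x\join y))$ for all $x,y$, there is a join homomorphism $F$ with $F(x)\in N(f(x))$. That case is exactly the neighbourhood-selection theorem of \cite{BKP}. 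I would reprove it, to check that only the listed hypotheses are used.

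First define the bounding maps $\Phi(x):=\inf N(f(x))$ and $\Psi(x):=\sup N(f(x))$. These exist because $N(f(x))$ is nonempty and bounded and $Y$ is conditionally complete. By \NThree\ both lie in $N(f(x))$, and by \NOne\ we have $\Phi(x)\le f(x)\le\Psi(x)$, so $\Phi\le\Psi$. Once $F$ is produced by Theorem~\ref{thm:join-sep-distrib} with $\Phi\le F\le\Psi$, convexity \NTwo\ gives $F(x)\in N(f(x))$.

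The remaining work, and the main obstacle, is to show that $\Phi$ is join-subhomomorphic and $\Psi$ join-superhomomorphic on $C$. For $\Psi$, I would apply \NFourJoin\ twice. Put $z=f(x\join y)$. We know $f(x)\join f(y)\in N(z)$, and $\Psi(x)\in N(f(x))$, so \NFourJoin\ with $t=\Psi(x)$, $u=f(x)$ gives $\Psi(x)\join f(y)\in N(z)$. Applying it again with $t=\Psi(y)$, $u=f(y)$ gives $\Psi(x)\join\Psi(y)\in N(z)$, whence $\Psi(x)\join\Psi(y)\le\sup N(z)=\Psi(x\join y)$. The same argument with $\Phi$ yields $\Phi(x)\join\Phi(y)\in N(z)$, but this only gives $\Phi(x\join y)\le\Phi(x)\join\Phi(y)$, which is precisely the inequality we need. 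So both bounds come directly from \NFourJoin. The infinitary distributivity of $Y$ is then not even needed in this step, although it may be needed in the original BKP argument if one instead takes envelopes over the whole block. If this two-step transport turned out to be insufficient, for instance because \NFourJoin\ could only be applied with $u$ in a fixed position, I would fall back on citing \cite{BKP}'s theorem for the distributive lattice $C$ verbatim.

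Finally, apply Theorem~\ref{thm:join-sep-distrib} with $X=C$ to $\Phi|_C\le\Psi|_C$. The resulting $F_C(x)=\sup\{\Phi(z):z\in C,\ z\le x\}$ is a join homomorphism on $C$ squeezed between $\Phi$ and $\Psi$, and is therefore in $N(f(x))$ by \NTwo. The supremum is taken over $z\in C$, so the decomposition $z=(z\meet x)\join(z\meet y)$ used in that theorem stays inside the Boolean block. This is the only place where blockwise distributivity is used, and it is why the construction must not take $z$ ranging over all of $L$, in view of Proposition~\ref{prop:OML-counterexample}.
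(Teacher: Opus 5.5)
Your reduction to a single block is the paper's own argument. The difference is what happens next: the paper cites the neighbourhood-selection theorem of Badora--Kochanek--Przebieracz (their Theorem~8) for the distributive lattice $C$, whereas you prove that case directly.

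Your direct argument is correct. The extremal selections $\Phi=\inf N(f(\cdot))$ and $\Psi=\sup N(f(\cdot))$ are join-sub- and join-superhomomorphic by two applications of \NFourJoin. Commutativity of $\join$ removes your worry about the position of $u$. Theorem~\ref{thm:join-sep-distrib} on $C$ together with \NTwo\ then finishes. This is the same combination the paper assembles later, in Lemmas~\ref{lem:ab} and~\ref{lem:join-bounds} and the proof of Theorem~\ref{thm:bi-admissible}.

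Your route makes the proof self-contained and gives a slightly sharper statement. Nothing in it uses the infinitary distributivity of $Y$; conditional completeness suffices to form $\inf N(y)$, $\sup N(y)$ and the envelope. So that hypothesis is not needed for this conclusion. The paper's route is shorter and keeps the hypotheses exactly those of the cited result.
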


\begin{proof}
Fix a block $C$.
Since $C$ is a Boolean algebra, it is distributive, and every pair of elements in $C$ is compatible.
Thus the hypothesis restricts to the exact hypothesis of \cite[Theorem~8]{BKP} for the map $f|_C$ and the same neighbourhood system $N$ on $Y$.
That theorem produces a join homomorphism $F_C$ with $F_C(x)\in N(f(x))$ for all $x\in C$.
\end{proof}

\begin{theorem}[Blockwise meet selection on orthomodular lattices]\label{thm:blockwise-meet}
Let $L$ be an orthomodular lattice and let $Y$ be an infinitarily distributive conditionally complete distributive lattice.
Let $N\colon Y\to\Pow(Y)$ be a globally fixed meet-admissible neighbourhood system.
If $f\colon L\to Y$ satisfies
\[
f(x)\meet f(y)\in N\bigl(f(x\meet y)\bigr)
\]
for all compatible $x,y\in L$, then for every Boolean block $C\subseteq L$ there exists a meet homomorphism
$G_C\colon C\to Y$ with $G_C(x)\in N(f(x))$ for all $x\in C$.
\end{theorem}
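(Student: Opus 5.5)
The plan is to reduce to the join case, Theorem~\ref{thm:blockwise-join}, by order duality, mirroring the way Theorem~\ref{thm:meet-sep-distrib} was obtained from Theorem~\ref{thm:join-sep-distrib}. Alternatively, one could argue directly on each block via the dual of \cite[Theorem~8]{BKP}. Fix a Boolean block $C\subseteq L$. Since $C$ is a Boolean algebra, it is distributive, and any two of its elements are compatible. Hence the hypothesis gives
\[
f(x)\meet f(y)\in N\bigl(f(x\meet y)\bigr)\qquad(x,y\in C),
\]
so $f|_C$ satisfies the meet-version of the BKP selection hypothesis on a distributive domain.

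The key step is to pass to the order-dual codomain $Y^{\op}$, where joins and meets swap. First, $Y^{\op}$ is again conditionally complete, since bounded-above and bounded-below sets exchange roles and suprema become infima. Second, the two infinitary distributivity identities of Definition~\ref{def:inf-distrib} are interchanged, so they are preserved. Third, the same map $N$, viewed on $Y^{\op}$, still assigns nonempty bounded sets. Axioms \NOne\ and \NThree\ are self-dual, since $\inf$ and $\sup$ swap. \NTwo\ is self-dual, since convexity only swaps $t$ and $u$. Finally, \NFourMeet\ in $Y$ reads verbatim as \NFourJoin\ in $Y^{\op}$. Thus $N$ is join-admissible on $Y^{\op}$. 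The domain need not be dualised, because a Boolean algebra is self-dual via $x\mapsto x^\perp$. Even simpler, \cite[Theorem~8]{BKP} only requires the domain to be distributive, so one can keep $C$ as is and regard $f|_C$ as a map $C\to Y^{\op}$ that is approximately join-preserving with respect to the $C$-operation $\meet$.

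To make this precise, I would apply \cite[Theorem~8]{BKP} to the distributive lattice $C^{\op}$, which is still distributive, and to $f|_C\colon C^{\op}\to Y^{\op}$. The join of $C^{\op}$ is $\meet$ in $C$ and the join of $Y^{\op}$ is $\meet$ in $Y$, so the displayed hypothesis is exactly
\[
f(x)\join_{\op} f(y)\in N\bigl(f(x\join_{\op} y)\bigr).
\]
The theorem then yields a join homomorphism $G_C\colon C^{\op}\to Y^{\op}$ with $G_C(x)\in N(f(x))$. Read back in the original orders, $G_C$ is a meet homomorphism $C\to Y$ with the required neighbourhood property.

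The only point needing care, and the main potential obstacle, is checking that every hypothesis of \cite[Theorem~8]{BKP} is genuinely self-dual under this passage. This includes conditional completeness, the infinitary distributivity used in that proof, the boundedness of each $N(y)$, and \NThree. I expect each of these to be a one-line check, as listed above.
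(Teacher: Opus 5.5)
Your proposal is correct and matches the paper's proof, which simply applies \cite[Theorem~8]{BKP} to the order-dual lattices $C^{\op}$ and $Y^{\op}$ (or, equivalently, repeats that proof with $\join$ and $\meet$ interchanged); your checks that conditional completeness, infinitary distributivity, boundedness of $N(y)$, \NOne--\NThree\ are self-dual, and that \NFourMeet\ in $Y$ becomes \NFourJoin\ in $Y^{\op}$, are exactly what the paper's one-line argument relies on. Your aside about avoiding dualising the domain via $x\mapsto x^\perp$ is unnecessary; done properly it would mean replacing $f$ by $x\mapsto f(x^\perp)$. Your final argument with $C^{\op}$ does not depend on it.
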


\begin{proof}
This is the meet-dual of Theorem~\ref{thm:blockwise-join}, applied to $f|_C$.
Formally, one may either repeat the proof of \cite[Theorem~8]{BKP} with $\join$ and $\meet$ interchanged, or apply \cite{BKP} to the dual lattices.
\end{proof}

\subsection{Gluing compatible blockwise selections}
Having obtained exact homomorphic selections on each Boolean block, the next natural question
is whether these local maps can be assembled into a single global map.
The basic obstruction is purely set-theoretic: the selections must agree on overlaps.
When this compatibility condition is met, gluing is straightforward.

\begin{proposition}[Gluing]\label{prop:gluing}
Let $L$ be an orthomodular lattice and $Y$ a set.
Assume that for each block $C$ we have a map $F_C\colon C\to Y$, and that these maps agree on overlaps:
$F_C(x)=F_D(x)$ whenever $x\in C\cap D$.
Then there is a unique global map $F\colon L\to Y$ with $F|_C=F_C$ for every block $C$.

If $Y$ is a lattice and each $F_C$ is a join homomorphism, then $F$ preserves joins of compatible pairs.
If each $F_C$ is a meet homomorphism, then $F$ preserves meets of compatible pairs.
If each $F_C$ is a lattice homomorphism, then $F$ preserves both operations on compatible pairs.
\end{proposition}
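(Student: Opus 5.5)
The plan has two steps: first construct $F$ and check it is well defined and unique, then transfer the homomorphism properties from the blocks.

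\emph{Existence and uniqueness.} I would use the fact, recalled in the preliminaries, that every orthomodular lattice is the union of its Boolean blocks. Every singleton $\{x\}$ together with $0,1,x^\perp$ forms a Boolean subalgebra. By Zorn's lemma it extends to a maximal one, so each $x\in L$ lies in at least one block. For $x\in L$, choose any block $C\ni x$ and set $F(x):=F_C(x)$. This is independent of the choice: if $x\in C\cap D$, the hypothesis gives $F_C(x)=F_D(x)$. By construction $F|_C=F_C$ for every block $C$. Uniqueness is immediate: any $F'$ with $F'|_C=F_C$ for all $C$ satisfies $F'(x)=F_C(x)=F(x)$ for a block $C$ containing $x$. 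Since blocks cover $L$, $F'=F$.

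\emph{Homomorphism properties.} Let $x,y$ be compatible. By definition of compatibility they lie in a common block $C$. Since $C$ is a Boolean subalgebra of $L$, it is closed under the lattice operations of $L$. Hence $x\join y\in C$ and $x\meet y\in C$, and the joins and meets computed in $C$ coincide with those in $L$. If $F_C$ is a join homomorphism, then
\[
F(x\join y)=F_C(x\join y)=F_C(x)\join F_C(y)=F(x)\join F(y).
\]
The meet case is identical, and the lattice-homomorphism case combines the two.

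\emph{Main subtlety.} The only non-formal points are these. First, the blocks must cover $L$; this is the Zorn's lemma step, or one may simply cite the standard fact. Second, a block must be a sublattice of $L$ and not merely a Boolean algebra in its own induced order. This holds because blocks are Boolean \emph{subalgebras}, closed under $\join$, $\meet$ and $^\perp$ of $L$. Once these are noted, the rest is bookkeeping.
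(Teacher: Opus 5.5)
Your proposal is correct and follows the paper's proof essentially step for step: define $F$ blockwise, get well-definedness from agreement on overlaps, and transfer the homomorphism identities using the fact that a block is a sublattice of $L$. Your Zorn's-lemma justification that the blocks cover $L$ (extending $\{0,x,x^\perp,1\}$ to a maximal Boolean subalgebra) is a sound addition; the paper takes this standard fact for granted.
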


\begin{proof}
Define $F(x)=F_C(x)$ for any block $C$ containing $x$.
The overlap hypothesis makes this well-defined, and uniqueness is clear.

If $x,y$ are compatible, they lie in some block $C$.
Since $C$ is a Boolean subalgebra, it is a sublattice of $L$; hence the join and meet computed in $C$ agree with those in $L$ for elements of $C$.
Thus, for example, if $F_C$ is a join homomorphism then
\[
F(x\join y)=F_C(x\join y)=F_C(x)\join F_C(y)=F(x)\join F(y),
\]
and the other cases are analogous.
\end{proof}

\begin{remark}
The blockwise viewpoint is closely related to the idea that an orthomodular lattice carries a `piecewise Boolean' structure,
with Boolean blocks playing the role of commeasurable contexts.
This perspective is developed in several directions by Heunen and collaborators; see \cite{HeunenPBA,HardingHeunenLN}.
\end{remark}

\begin{corollary}\label{cor:glue-01}
Assume the hypotheses of Theorem~\ref{thm:blockwise-join}, with the globally fixed neighbourhood system $N\colon Y\to\Pow(Y)$.
Suppose that any two distinct blocks of $L$ intersect only in $\{0,1\}$, and suppose that $N(f(0))$ and $N(f(1))$ are singletons.
Then there exists a global map $F\colon L\to Y$ such that $F|_C$ is a join homomorphism for every block $C$
and $F(x)\in N(f(x))$ for all $x\in L$.
In particular, $F$ preserves joins of compatible pairs.
\end{corollary}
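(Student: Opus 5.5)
The plan is to apply Theorem~\ref{thm:blockwise-join} on every block and then check that the resulting selections are already forced to agree on overlaps, so that Proposition~\ref{prop:gluing} assembles them.

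First, for each Boolean block $C\subseteq L$, Theorem~\ref{thm:blockwise-join} gives a join homomorphism $F_C\colon C\to Y$ with $F_C(x)\in N(f(x))$ for all $x\in C$. No choices need to be coordinated between blocks at this stage; we simply fix one such $F_C$ for each $C$.

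Second, we verify the overlap condition of Proposition~\ref{prop:gluing}. Let $C\neq D$ be distinct blocks and $x\in C\cap D$. By hypothesis $x\in\{0,1\}$. Every block is a Boolean subalgebra, so it contains $0$ and $1$, and both $F_C(x)$ and $F_D(x)$ lie in $N(f(x))$. Since $N(f(0))$ and $N(f(1))$ are singletons, this forces
\[
F_C(x)=F_D(x).
\]
When $C=D$ the condition is trivial.

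Third, Proposition~\ref{prop:gluing} now yields a unique global map $F\colon L\to Y$ with $F|_C=F_C$ for every block $C$. Hence each restriction $F|_C$ is a join homomorphism. Since $L$ is the union of its blocks, every $x\in L$ lies in some block $C$, and therefore $F(x)=F_C(x)\in N(f(x))$. The second part of Proposition~\ref{prop:gluing} also gives preservation of joins of compatible pairs.

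There is no real obstacle; the only point requiring care is the fact that every element lies in some block. This is the standard consequence of Zorn's lemma applied to Boolean subalgebras containing $\{0,x,x^\perp,1\}$, and it was already invoked in the introduction.
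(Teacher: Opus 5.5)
Your proposal is correct and matches the paper's proof essentially step for step: choose $F_C$ on each block via Theorem~\ref{thm:blockwise-join}, use the singleton hypothesis on $N(f(0))$ and $N(f(1))$ to force agreement on $C\cap D=\{0,1\}$, and glue with Proposition~\ref{prop:gluing}. Your explicit remark that every element lies in some block, which is needed to get $F(x)\in N(f(x))$ on all of $L$, is a point the paper leaves implicit.
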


\begin{proof}
Choose $F_C$ on each block $C$ using Theorem~\ref{thm:blockwise-join}.
If $C\ne D$, then $C\cap D=\{0,1\}$; the singleton hypothesis forces $F_C(0)=F_D(0)$ and $F_C(1)=F_D(1)$.
The overlap condition follows, and Proposition~\ref{prop:gluing} gives the global map.
\end{proof}

\begin{remark}
The intersection of all blocks of an orthomodular lattice is the centre; see, for example, \cite[Chapter~7]{Kalmbach} or \cite[Section~1.5]{PtP}.
This controls global overlap, but pairwise overlaps may be larger, and that is precisely where gluing can become delicate.
\end{remark}

\section{A concrete gluing failure example}
We now show that the existence of blockwise admissible homomorphic selections does not, by itself,
ensure the existence of a global selection.
The example below exhibits a finite orthomodular lattice in which blockwise join homomorphisms
exist and satisfy the neighbourhood constraints, yet may disagree on a non-trivial overlap.

\begin{example}\label{ex:glue-failure}
There exists a finite orthomodular lattice $L$ with two blocks $B_1,B_2$ such that $|B_1\cap B_2|=4$ and such that the following holds.
There are a distributive lattice $Y$, a globally fixed join-admissible neighbourhood system $N$ on $Y$, and a map $f\colon L\to Y$ satisfying the hypotheses of Theorem~\ref{thm:blockwise-join}; nevertheless, admissible blockwise join homomorphisms need not agree on $B_1\cap B_2$.
\end{example}

\begin{proof}
Let $B_1$ be the $8$-element Boolean algebra with atoms $e,a,b$, and let $B_2$ be the $8$-element Boolean algebra with atoms $e,c,d$.
Paste them along the common Boolean subalgebra generated by $e$,
\[
B_1\cap B_2=\{0,e,e^\perp,1\},
\]
and denote the resulting orthomodular lattice by $L=B_1\cup B_2$.

Let $Y=\{0<1<2\}$ and set
\[
N(0)=\{0\},\qquad N(1)=\{0,1,2\},\qquad N(2)=\{2\}.
\]
This $N$ is join-admissible: the case $N(z)=Y$ is automatic, and the singleton cases force both $u\join y$ and $t\join y$ to be the same singleton value.
Define $f(0)=0$ and $f(x)=1$ for every $x\ne0$.
Then $f(x)\join f(y)\in N(f(x\join y))$ for compatible $x,y$, since either $x=y=0$ or $N(f(x\join y))=N(1)=Y$.

On $B_1$ take $F_{B_1}\equiv0$.
On $B_2$ prescribe
\[
F_{B_2}(0)=0,
\qquad F_{B_2}(e)=2,
\qquad F_{B_2}(c)=F_{B_2}(d)=0,
\]
and extend by finite joins inside the Boolean algebra $B_2$.
Both maps are join homomorphisms and satisfy the required neighbourhood inclusions.
However, on the overlap one has
\[
F_{B_1}(e)=0\ne2=F_{B_2}(e),
\]
so these admissible blockwise selections cannot be glued.
\end{proof}

\section{Bi-admissible selection: from approximate \texorpdfstring{$\join$ and $\meet$}{joins and meets} to lattice homomorphisms}

\subsection{Extremal selections and induced sub/superhomomorphisms}
The admissibility axioms guarantee that neighbourhoods admit extremal elements.
Extracting these extremal points yields two canonical selections, one below and one above the
original map, which will turn out to satisfy the appropriate sub- and superhomomorphism
inequalities.

\begin{lemma}\label{lem:ab}
Let $Y$ be a conditionally complete lattice, let $N\colon Y\to\Pow(Y)$ satisfy \NOne--\NThree, and let $f\colon X\to Y$ be any map.
Define
\[
a(x)=\inf N(f(x)),\qquad b(x)=\sup N(f(x)).
\]
Then $a(x),b(x)\in N(f(x))$ for all $x$, and $a(x)\le f(x)\le b(x)$.
\end{lemma}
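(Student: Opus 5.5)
The claim follows almost directly from \NOne\ and \NThree, once we check that the infimum and supremum exist. We fix $x\in X$ and set $S:=N(f(x))$.

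First, $a(x)$ and $b(x)$ are well defined. By Definition~\ref{def:neigh}, $S$ is a nonempty bounded subset of $Y$, so it is bounded both above and below. Since $Y$ is conditionally complete, $\inf S$ and $\sup S$ exist in $Y$.

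Second, both extremal points lie in the neighbourhood. Axiom \NThree, applied to the point $y=f(x)$, says exactly that $\inf N(f(x))\in N(f(x))$ and $\sup N(f(x))\in N(f(x))$. That is, $a(x),b(x)\in N(f(x))$.

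Third, $f(x)$ is sandwiched between them. Axiom \NOne\ gives $f(x)\in N(f(x))=S$. Since the infimum of a set is a lower bound for each of its elements and the supremum an upper bound, $a(x)=\inf S\le f(x)\le \sup S=b(x)$.

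No step is a genuine obstacle; the only point needing care is to note that "bounded" in Definition~\ref{def:neigh} means bounded above and below. This is what lets conditional completeness supply both extrema, rather than requiring $Y$ to be complete.
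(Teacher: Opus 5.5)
Your proof is correct and follows essentially the same route as the paper: \NThree\ puts $a(x),b(x)$ in $N(f(x))$, and \NOne\ gives $f(x)\in N(f(x))$, so $f(x)$ lies between the infimum and supremum. The only addition is your explicit check that the extrema exist, via boundedness of $N(f(x))$ and conditional completeness of $Y$; the paper leaves this implicit in \NThree.
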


\begin{proof}
By \NThree, the infimum and supremum of $N(f(x))$ belong to $N(f(x))$.
Since $f(x)\in N(f(x))$ by \NOne, it lies between those extremal points, so $a(x)\le f(x)\le b(x)$.
\end{proof}

\begin{lemma}\label{lem:join-bounds}
Let $N$ be join-admissible and let $f\colon X\to Y$ satisfy $f(x)\join f(y)\in N(f(x\join y))$ for all $x,y\in X$.
With $a,b$ as in Lemma~\ref{lem:ab}, the map $a$ is join-subhomomorphic and $b$ is join-superhomomorphic.
\end{lemma}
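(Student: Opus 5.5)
The plan is to prove both inequalities in one pass: we show that the ``perturbed'' joins $a(x)\join a(y)$ and $b(x)\join b(y)$ still lie in the neighbourhood $N(f(x\join y))$, and then compare them with the extremal points $\inf N(f(x\join y))=a(x\join y)$ and $\sup N(f(x\join y))=b(x\join y)$. Membership will come from applying join stability \NFourJoin\ twice, replacing $f(x)$ and then $f(y)$ by a point of its own neighbourhood. The extremal points are supplied by Lemma~\ref{lem:ab}.

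Fix $x,y\in X$ and put $z:=f(x\join y)$. The hypothesis gives $f(x)\join f(y)\in N(z)$. By Lemma~\ref{lem:ab}, $a(x)\in N(f(x))$.
\begin{itemize}
\item[(i)] We apply \NFourJoin\ with $t=a(x)$, $u=f(x)$ and the fixed element $f(y)$ in the role of $y$. This yields $a(x)\join f(y)\in N(z)$.
\item[(ii)] Using commutativity, we rewrite this as $f(y)\join a(x)\in N(z)$. We apply \NFourJoin\ again with $t=a(y)\in N(f(y))$, $u=f(y)$ and the fixed element $a(x)$. This yields $a(y)\join a(x)\in N(z)$.
\end{itemize}
Since every element of $N(z)$ dominates $\inf N(z)$, we conclude $a(x\join y)=\inf N(z)\le a(x)\join a(y)$. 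This says that $a$ is join-subhomomorphic.

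The argument for $b$ is identical, with $t=b(x)$ and then $t=b(y)$. It gives $b(x)\join b(y)\in N(z)$, and hence $b(x)\join b(y)\le \sup N(z)=b(x\join y)$. So $b$ is join-superhomomorphic.

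The only delicate point is bookkeeping. Axiom \NFourJoin\ is stated with the perturbed element in the left slot of the join, so the second substitution must be preceded by commuting the join. We must also make sure that \NFourJoin\ is applied with the same target neighbourhood $N(z)$ throughout, which it is, since $z$ is fixed. No completeness or distributivity of $X$ is needed. Conditional completeness of $Y$ is used only through Lemma~\ref{lem:ab}, where \NThree\ and the boundedness of $N(z)$ ensure the extrema exist and lie in $N(z)$.
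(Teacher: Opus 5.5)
Your proposal is correct and follows essentially the same argument as the paper: two applications of \NFourJoin\ give $a(x)\join a(y)\in N(f(x\join y))$ (and likewise for $b$), and you then compare with $\inf N(f(x\join y))$ and $\sup N(f(x\join y))$. The only difference is that you spell out the commutativity step before the second substitution, which the paper leaves implicit.
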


\begin{proof}
Fix $x,y\in X$ and write $z=f(x\join y)$.
Since $a(x)\in N(f(x))$ and $f(x)\join f(y)\in N(z)$, join stability \NFourJoin\ gives $a(x)\join f(y)\in N(z)$.
Now $a(y)\in N(f(y))$, and another application of \NFourJoin\ yields $a(x)\join a(y)\in N(z)$.
By definition, $a(x\join y)=\inf N(z)$ is a lower bound of $N(z)$, so $a(x\join y)\le a(x)\join a(y)$.

The argument for $b$ is analogous: starting with $b(x)\in N(f(x))$ and then $b(y)\in N(f(y))$,
one obtains $b(x)\join b(y)\in N(z)$, and since $b(x\join y)=\sup N(z)$ is an upper bound of $N(z)$, one gets $b(x)\join b(y)\le b(x\join y)$.
\end{proof}
The corresponding statement for meets is entirely dual.

\begin{lemma}[Meet bounds]\label{lem:meet-bounds}
Let $N$ be meet-admissible and let $f\colon X\to Y$ satisfy $f(x)\meet f(y)\in N(f(x\meet y))$ for all $x,y\in X$.
With $a,b$ as in Lemma~\ref{lem:ab}, the map $a$ is meet-subhomomorphic and $b$ is meet-superhomomorphic.
\end{lemma}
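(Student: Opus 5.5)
The plan is to mirror the proof of Lemma~\ref{lem:join-bounds}, replacing join stability by meet stability \NFourMeet. First I fix $x,y\in X$ and put $z=f(x\meet y)$. By hypothesis $f(x)\meet f(y)\in N(z)$. By Lemma~\ref{lem:ab} we have $a(x)\in N(f(x))$ and $b(x)\in N(f(x))$, and likewise for $y$.

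For $a$, I apply \NFourMeet\ with $t=a(x)$, $u=f(x)$ and the fixed element $f(y)$. From $a(x)\in N(f(x))$ and $f(x)\meet f(y)\in N(z)$ this gives $a(x)\meet f(y)\in N(z)$. Commutativity of $\meet$ lets me rewrite this as $f(y)\meet a(x)\in N(z)$. A second application of \NFourMeet, now with $t=a(y)$, $u=f(y)$ and fixed element $a(x)$, yields $a(y)\meet a(x)=a(x)\meet a(y)\in N(z)$. Since $a(x\meet y)=\inf N(z)$ is a lower bound of $N(z)$, we get $a(x\meet y)\le a(x)\meet a(y)$. This is exactly meet-subhomomorphy as defined in the paper.

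For $b$, the same two-step transport with $b(x)$ and $b(y)$ gives $b(x)\meet b(y)\in N(z)$. Because $b(x\meet y)=\sup N(z)$ is an upper bound of $N(z)$, we get $b(x)\meet b(y)\le b(x\meet y)$, which is meet-superhomomorphy.

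The only point needing care is bookkeeping rather than a genuine obstacle. Axiom \NFourMeet\ is stated with the perturbed element in the left slot, so commutativity of $\meet$ must be invoked explicitly between the two applications. I also need to check that the naming convention matches the paper's definitions: $\Psi$ meet-sub means $\Psi(x\meet y)\le\Psi(x)\meet\Psi(y)$. It does, and I note that this is the reverse of what one might guess by naive duality from the join case. Existence of the infima and suprema used here is supplied by \NThree\ via Lemma~\ref{lem:ab}.
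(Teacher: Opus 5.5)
Your proof is correct and follows the same route as the paper, which just says the argument is the meet-analogue of Lemma~\ref{lem:join-bounds} with \NFourMeet\ replacing \NFourJoin. Your two applications of \NFourMeet\ (with commutativity made explicit), followed by comparing $a(x)\meet a(y)$ and $b(x)\meet b(y)$ with $\inf N(z)$ and $\sup N(z)$, are exactly that argument written out. One small precision: $\inf N(z)$ and $\sup N(z)$ exist because $Y$ is conditionally complete and $N(z)$ is bounded, whereas \NThree\ only supplies the memberships $a(x),b(x)\in N(f(x))$.
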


\begin{proof}
The proof is the meet-dual of Lemma~\ref{lem:join-bounds}, using meet stability \NFourMeet\ in place of join stability \NFourJoin.
\end{proof}

\subsection{A bi-admissible selection theorem}
To combine the join and meet bounds obtained from the extremal selections, we require a
sandwich principle that interpolates between a join homomorphism lying below a meet
homomorphism.
The following lemma is a convenient reformulation of Kubi\'s sandwich theorem adapted to
this orientation.

\begin{lemma}[Kubi\'s sandwich theorem in the join-below-meet orientation]
\label{lem:kubis-join-below-meet}
Let $X$ be a distributive lattice and let $B$ be a complete Boolean algebra.
Suppose that $F,G\colon X\to B$ satisfy:
\begin{enumerate}[label=(\roman*)]
\item $F$ is a join homomorphism;
\item $G$ is a meet homomorphism;
\item $F(x)\le G(x)$ for every $x\in X$.
\end{enumerate}
Then there exists a lattice homomorphism $H\colon X\to B$ such that
\[
F(x)\le H(x)\le G(x)\qquad\text{for all }x\in X.
\]
\end{lemma}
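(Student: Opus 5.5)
The plan is a Sikorski-type extension argument combined with Zorn's lemma. It uses that a complete Boolean algebra satisfies the infinite distributive laws $b\meet\sup A=\sup\{b\meet a:a\in A\}$ and their duals, and that it has complements, written $b'$.

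First note two easy facts. $F$ and $G$ are isotone, because join and meet homomorphisms are. Also, if $x\le y$ then $F(x)\le F(y)\le G(y)$.

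\textbf{Partial sandwiched homomorphisms.} Consider pairs $(S,h)$, where $S\subseteq X$ is a sublattice (possibly empty) and $h\colon S\to B$ is a lattice homomorphism. We require the following invariant:
\[
(\ast)\qquad s\meet x\le t\join y\ \Longrightarrow\ h(s)\meet F(x)\le h(t)\join G(y)
\]
for all $s,t\in S\cup\{\text{absent}\}$ and $x,y\in X\cup\{\text{absent}\}$. Here an absent $s$ or $x$ is read as the top of $B$ on the left, and an absent $t$ or $y$ as the bottom on the right. One may also adjoin formal bounds to $X$ to make this uniform.

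The invariant has the following consequences. Taking only $x\le y$ recovers $F\le G$, so the empty partial map satisfies $(\ast)$. Taking $s=y$ gives $h(s)\le G(s)$, and taking $x=t$ gives $F(t)\le h(t)$. Hence $(\ast)$ forces $F\le h\le G$ on $S$.

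Order these pairs by extension. The union of a chain again satisfies $(\ast)$, since each instance of $(\ast)$ involves only finitely many elements. Zorn's lemma therefore gives a maximal pair $(S,h)$. It remains to show $S=X$, after which $H:=h$ is the required lattice homomorphism.

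\textbf{One-step extension.} Suppose $x_0\notin S$. By distributivity of $X$, the sublattice generated by $S\cup\{x_0\}$ consists of the elements $s\join(t\meet x_0)$, where $s$ and $t$ range over $S$ (allowing formal bounds). I would choose the value
\[
c:=\sup\bigl\{h(s)\meet F(x)\meet\bigl(h(t)\join G(y)\bigr)' \;:\; s\meet x\le t\join y\join x_0\bigr\},
\]
which is the smallest value forced by $(\ast)$ with $x_0$ added on the right. Then set
\[
\tilde h\bigl(s\join(t\meet x_0)\bigr):=h(s)\join\bigl(h(t)\meet c\bigr).
\]
Two things must be checked. First, $\tilde h$ must be well defined and a lattice homomorphism. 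This is the classical Sikorski extension criterion: it holds as soon as $h(s)\le c$ whenever $s\le x_0$, and $c\le h(t)$ whenever $x_0\le t$. Both follow from $(\ast)$ together with the infinite distributive law in $B$. Second, $\tilde h$ must satisfy $(\ast)$ on the enlarged sublattice.

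\textbf{Expected main obstacle.} The hardest part is the second check. One expands both sides into the normal form $s\join(t\meet x_0)$ and splits into the cases where $c$ appears on the left or on the right of $(\ast)$.

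When $c$ is on the left, the sup in the definition of $c$ is distributed using $b\meet\sup A=\sup\{b\meet a:a\in A\}$. Each resulting term is then handled by the old invariant, after combining inequalities in $X$ with distributivity. Here it matters that $F(x)\meet F(x')\ge F(x\meet x')$ fails in general, so the argument must only ever use $F$ of joins: $F(x\join x')=F(x)\join F(x')$.

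When $c$ is on the right, the choice of $c$ as a sup gives what is needed essentially by construction.

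If this bookkeeping turns out too delicate, my fallback would be a two-stage reduction. I would first realise $X$ as a sublattice of a Boolean algebra, its free Boolean extension. I would then extend $F$ and $G$ there using the injectivity of complete Boolean algebras (Sikorski), and carry out the sandwich argument in the Boolean setting, where complements simplify the case analysis.
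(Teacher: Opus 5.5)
There is a genuine gap, and it cannot be closed, because the lemma is false as stated. In your argument the failure is in the one-step extension, in the case you call ``$c$ on the left''. Since $\tilde h(x_0)=c$, keeping $\tilde h\le G$ requires $c\le G(x_0)$. That is, you need $h(s)\meet F(x)\le h(t)\join G(y)\join G(x_0)$ whenever $s\meet x\le t\join y\join x_0$. The old invariant $(\ast)$ only gives $h(s)\meet F(x)\le h(t)\join G(y\join x_0)$. But $G(y\join x_0)$ can be strictly larger than $G(y)\join G(x_0)$, because $G$ preserves only meets.

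This is the mirror image of the problem you flagged for $F$, and no bookkeeping avoids it. An invariant that propagates would have to allow finite meets of $F$-values on the left and finite joins of $G$-values on the right:
$x_1\meet\dots\meet x_n\le y_1\join\dots\join y_k\Rightarrow F(x_1)\meet\dots\meet F(x_n)\le G(y_1)\join\dots\join G(y_k)$.
This condition is necessary for the conclusion, but it does not follow from (i)--(iii).

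Your single-term invariant is exactly right for the \emph{opposite} orientation: a meet homomorphism $G$ below a join homomorphism $F$, with $G$ on the left and $F$ on the right. There the extra terms merge, and for $B=\{0,1\}$ that version is just the prime filter theorem. Separately, your extension criterion is the one for Boolean subalgebras. Over a sublattice of a distributive lattice you need $s\meet x_0\le t\Rightarrow h(s)\meet c\le h(t)$ and $t\le s\join x_0\Rightarrow h(t)\le h(s)\join c$ for all $s,t\in S$.

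\textbf{A counterexample.} Let $X=\{0,p,q,m,u,v,1\}$ with $0<p,q<m<u,v<1$, $p\meet q=0$, $p\join q=m$, $u\meet v=m$, $u\join v=1$. This is two four-element Boolean algebras glued top to bottom, so it is distributive. Let $B=\{0,1\}$, and set $F(x)=1$ iff $x\not\le m$, and $G(x)=1$ iff $x\ge m$.
\begin{itemize}
\item $F$ is a join homomorphism, since ${\downarrow}m$ is an ideal.
\item $G$ is a meet homomorphism, since ${\uparrow}m$ is a filter.
\item $F\le G$, since every element of $X$ is comparable with $m$.
\end{itemize}
Now $F(u)=F(v)=1$ and $G(p)=G(q)=0$. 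So any lattice homomorphism $H$ with $F\le H\le G$ would give $1=H(u)\meet H(v)=H(m)=H(p)\join H(q)=0$.

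Inside your scheme, $S=\{m,u,v,1\}$ with $h\equiv 1$ satisfies $(\ast)$. Yet for $x_0=p$ no value works:
\begin{itemize}
\item Your formula gives $c=1$, via the term with $s=m$, $y=q$ and $x,t$ absent, since $m\le q\join p$. This violates $\tilde h(p)\le G(p)=0$.
\item The value $0$ violates the instance $m\le p\join q$ of $(\ast)$, namely $h(m)\le \tilde h(p)\join G(q)$.
\end{itemize}
Your fallback through the free Boolean extension cannot rescue this either, since the conclusion itself fails.

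The paper's own proof has the matching defect. It obtains this orientation from Kubi\'s's corollary for a meet homomorphism below a join homomorphism by interchanging the two convexities. But dualising $X$ alone turns homomorphisms into antihomomorphisms. Dualising $X$ and $B$ together, or composing with complementation in $B$, turns a join homomorphism below a meet homomorphism into another pair of the same kind. So this orientation is not reachable by symmetry, and the example shows it is false. What Kubi\'s's theorem gives, and what a corrected version of your Zorn--Sikorski argument would prove, is the statement with $G\le F$.
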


\begin{proof}
This is a direct corollary of Kubi\'s sandwich theorem for $S_4$ bi-convexity spaces
\cite[Theorem~3.3]{Kubis}.
In particular, Kubi\'s lattice corollary \cite[Theorem~3.7]{Kubis} (\emph{cf}.\ also \cite[Theorem~4]{BKP})
covers the orientation ``meet homomorphism below join homomorphism''.
Since Kubi\'s theorem is symmetric under interchanging the two convexities (ideals and filters),
swapping them yields the present ``join below meet'' form, with the same conclusion.
\end{proof}
We are now in a position to combine the extremal selections and the sandwich principle into
a single selection theorem.
Under bi-admissibility, approximate preservation of both joins and meets forces the existence
of an exact lattice homomorphism taking values inside the prescribed neighbourhoods.

\begin{theorem}[Bi-admissible selection yields lattice homomorphisms]\label{thm:bi-admissible}
Let $X$ be a distributive lattice and let $B$ be a complete Boolean algebra.
Let $N\colon B\to\Pow(B)$ be bi-admissible.
Assume that $f\colon X\to B$ satisfies, for all $x,y\in X$,
\[
f(x)\join f(y)\in N\bigl(f(x\join y)\bigr),\qquad
f(x)\meet f(y)\in N\bigl(f(x\meet y)\bigr).
\]
Then there exists a lattice homomorphism $H\colon X\to B$ such that $H(x)\in N(f(x))$ for every $x\in X$.
\end{theorem}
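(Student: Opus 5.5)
The plan is to squeeze an exact lattice homomorphism between the two extremal selections of Lemma~\ref{lem:ab}, using the two separation theorems and then the Kubi\'s sandwich. Set $a(x)=\inf N(f(x))$ and $b(x)=\sup N(f(x))$. Since $B$ is complete, it is in particular conditionally complete, so Lemma~\ref{lem:ab} applies: $a(x),b(x)\in N(f(x))$ and $a\le f\le b$. By Lemma~\ref{lem:join-bounds}, using join-admissibility and the approximate join identity, $a$ is join-subhomomorphic and $b$ is join-superhomomorphic. By Lemma~\ref{lem:meet-bounds}, using meet-admissibility and the approximate meet identity, $a$ is meet-subhomomorphic and $b$ is meet-superhomomorphic.

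Next apply the two separation theorems on the distributive lattice $X$.
\begin{itemize}
\item Theorem~\ref{thm:join-sep-distrib} with $\Phi=a$ and $\Psi=b$ gives a join homomorphism
\[
F(x)=\sup\{a(z):z\le x\}\quad\text{with}\quad a\le F\le b.
\]
\item Theorem~\ref{thm:meet-sep-distrib} with $\Psi=a$ and $\Phi=b$ gives a meet homomorphism
\[
G(x)=\inf\{b(z):z\ge x\}\quad\text{with}\quad a\le G\le b.
\]
\end{itemize}

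The key point, and the step I expect to need care, is that Lemma~\ref{lem:kubis-join-below-meet} requires $F\le G$ pointwise. Merely having both maps between $a$ and $b$ does not give this. I will argue it from the envelope formulas. $F$ is isotone (Remark~\ref{rem:isotone-simple}) and satisfies $F\le b$. So for every $w\ge x$ we get $F(x)\le F(w)\le b(w)$. Taking the infimum over $w\ge x$ yields $F(x)\le G(x)$. Should this fail to go through for some reason, the fallback is the direct check: for $z\le x\le w$ one has $a(z)\le b(w)$, because $a(z)\le F(z)\le F(w)\le b(w)$.

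With $F\le G$ established, Lemma~\ref{lem:kubis-join-below-meet} produces a lattice homomorphism $H\colon X\to B$ with $F\le H\le G$. Hence $a(x)\le H(x)\le b(x)$ for every $x\in X$. Both $a(x)$ and $b(x)$ lie in $N(f(x))$, so convexity \NTwo\ gives $H(x)\in N(f(x))$, which completes the argument.
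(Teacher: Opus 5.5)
Your argument follows the paper's proof step for step. Your derivation of $F\le G$ from the isotonicity of the envelope $F$ together with $F\le b$ is a valid, slightly more economical variant. The final step, however, has a genuine gap, and the paper's proof shares it: Lemma~\ref{lem:kubis-join-below-meet} is false in the join-below-meet orientation.

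For a counterexample, let $X=\{0,c_1,c_2,s,t_1,t_2,1\}$ be two copies of the four-element Boolean lattice stacked so that the top of the lower copy is the bottom $s$ of the upper copy. Thus $0<c_1,c_2<s<t_1,t_2<1$, and $c_1\vee c_2=s=t_1\wedge t_2$. This is the down-set lattice of the ordinal sum of two two-element antichains, hence distributive. Let $B=\{0,1\}$, and put $F(x)=1$ iff $x\not\le s$, and $G(x)=1$ iff $x\ge s$.
\begin{itemize}
\item $F$ is a join homomorphism, since $F^{-1}(0)={\downarrow}s$ is an ideal.
\item $G$ is a meet homomorphism, since $G^{-1}(1)={\uparrow}s$ is a filter.
\item $F\le G$.
\end{itemize}
Yet a lattice homomorphism $H$ with $F\le H\le G$ would give $0=H(c_1)\vee H(c_2)=H(s)=H(t_1)\wedge H(t_2)=1$. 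The paper's appeal to symmetry does not help. Interchanging ideals and filters amounts to dualising $X$ and $B$ simultaneously, which preserves the meet-below-join orientation.

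Moreover, in this example $a:=F$ is join- and meet-subhomomorphic, and $b:=G$ is join- and meet-superhomomorphic. The envelopes of Theorems~\ref{thm:join-sep-distrib} and~\ref{thm:meet-sep-distrib} return $F$ and $G$ themselves. So the sandwich data you extract from $N$ cannot by themselves force a lattice homomorphism between $a$ and $b$; a correct proof must use more about $N$.

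The missing idea is that bi-admissibility is very rigid on a Boolean algebra. Write $t\,R\,u$ for $t\in N(u)$.
\begin{itemize}
\item Taking $y=0$ in \NFourJoin\ shows that $R$ is transitive.
\item Taking $z=u\vee y$ in \NFourJoin\ and $z=u\wedge y$ in \NFourMeet, together with \NOne, shows that $R$ is compatible with $\vee$ and $\wedge$.
\item Consequently, for every $e$ (with $e^\perp$ its complement), $v\,R\,u$ iff $(v\wedge e)\,R\,(u\wedge e)$ and $(v\wedge e^\perp)\,R\,(u\wedge e^\perp)$.
\end{itemize}
Let $d=\sup N(0)$ and $c=\inf N(1)$; by \NThree\ these lie in the respective neighbourhoods. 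By \NTwo, $t\in N(u)$ iff both $t\vee u$ and $t\wedge u$ lie in $N(u)$. Localising at $e=t\wedge u^\perp$ gives $t\vee u\in N(u)\iff t\wedge u^\perp\in N(0)\iff t\le u\vee d$. Localising at $e=u\wedge t^\perp$ gives $t\wedge u\in N(u)\iff 0\in N(u\wedge t^\perp)\iff t\ge u\wedge c$; here one uses that $0\in N(w)\iff w^\perp\in N(1)$. Hence $N(u)=[u\wedge c,\,u\vee d]$ for every $u$.

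Now meet both approximate identities with $e=c\wedge d^\perp$, noting that $d\wedge e=0$. This shows that $g(x)=f(x)\wedge c\wedge d^\perp$ is an exact lattice homomorphism. Then $H(x)=g(x)\vee(c\wedge d)$ is a lattice homomorphism with $f(x)\wedge c\le H(x)\le f(x)\vee d$, that is, $H(x)\in N(f(x))$. This proves the theorem directly, without the separation theorems, without the sandwich lemma, and without distributivity of $X$.
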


\begin{proof}
Let $a,b$ be the extremal selections from Lemma~\ref{lem:ab}.
By Lemmas~\ref{lem:join-bounds} and \ref{lem:meet-bounds}, the map $a$ is simultaneously join-subhomomorphic and meet-subhomomorphic,
and $b$ is simultaneously join-superhomomorphic and meet-superhomomorphic.
The pointwise inequality $a\le b$ is immediate from the definitions.

Applying the distributive join separation theorem (Theorem~\ref{thm:join-sep-distrib}) to $\Phi=a$ and $\Psi=b$, one obtains a join homomorphism $F$ with $a\le F\le b$.
Applying the meet separation theorem (Theorem~\ref{thm:meet-sep-distrib}) to $\Psi=a$ and $\Phi=b$, one obtains a meet homomorphism $G$ with $a\le G\le b$.

It is convenient to note that meet-subhomomorphisms and join-superhomomorphisms are isotone.
For meet-subhomomorphisms this follows by duality from the corresponding observation for join-superhomomorphisms.
For join-superhomomorphisms, if $u\le v$ then $v=u\join v$, and hence $\Psi(v)=\Psi(u\join v)\ge \Psi(u)\join\Psi(v)$, forcing $\Psi(u)\le \Psi(v)$.

Now fix $x\in X$.
If $z\le x\le w$, then isotonicity gives $a(z)\le a(x)\le b(x)\le b(w)$.
Taking suprema over $z\le x$ yields $F(x)\le b(w)$ for all $w\ge x$, and hence $F(x)\le \inf_{w\ge x} b(w)=G(x)$.
Thus $F\le G$ pointwise.

Finally, Lemma~\ref{lem:kubis-join-below-meet} yields a lattice homomorphism $H\colon X\to B$
such that $F\le H\le G$.
In particular, for each $x\in X$ we have
\[
a(x)\le F(x)\le H(x)\le G(x)\le b(x).
\]
The points $a(x)$ and $b(x)$ belong to $N(f(x))$ by Lemma~\ref{lem:ab}, and $N(f(x))$ is convex by \NTwo,
so every element between them lies in $N(f(x))$.
Therefore $H(x)\in N(f(x))$ for all $x\in X$.
\end{proof}

\section{Orthomodular blockwise lattice selection}

\begin{theorem}[Blockwise lattice homomorphisms under bi-admissibility]\label{thm:blockwise-lattice}
Let $L$ be an orthomodular lattice and let $B$ be a complete Boolean algebra.
Let $N\colon B\to\Pow(B)$ be a globally fixed bi-admissible neighbourhood system.
Assume that $f\colon L\to B$ satisfies, for all compatible $x,y\in L$,
\[
f(x)\join f(y)\in N\bigl(f(x\join y)\bigr),\qquad
f(x)\meet f(y)\in N\bigl(f(x\meet y)\bigr).
\]
Then for every Boolean block $C\subseteq L$ there exists a lattice homomorphism $H_C\colon C\to B$ such that $H_C(x)\in N(f(x))$ for all $x\in C$.
\end{theorem}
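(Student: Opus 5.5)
The plan is to reduce the statement to Theorem~\ref{thm:bi-admissible}, applied block by block, exactly as Theorem~\ref{thm:blockwise-join} was reduced to \cite[Theorem~8]{BKP}. Fix a Boolean block $C\subseteq L$ and consider the restriction $f|_C\colon C\to B$. The domain $C$ will play the role of the distributive lattice $X$ in Theorem~\ref{thm:bi-admissible}, and the codomain $B$ and the globally fixed bi-admissible system $N$ are used unchanged.

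The first step is to check that $C$ is a legitimate domain. A block is a Boolean subalgebra of $L$, so it is closed under the operations of $L$, and in particular it is a sublattice. Hence $C$ is distributive, and for $x,y\in C$ the elements $x\join y$ and $x\meet y$ computed in $C$ coincide with those computed in $L$. I would state this explicitly, as in the proof of Proposition~\ref{prop:gluing}, because it ensures that $f(x\join y)$ and $f(x\meet y)$ have the same meaning in $C$ as in $L$.

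The second step is to verify the hypotheses of Theorem~\ref{thm:bi-admissible} for $f|_C$. Any two elements $x,y\in C$ lie in the common block $C$, so they are compatible by definition. The assumed approximate identities therefore hold for all pairs in $C$:
\[
f(x)\join f(y)\in N\bigl(f(x\join y)\bigr),\qquad
f(x)\meet f(y)\in N\bigl(f(x\meet y)\bigr)\qquad(x,y\in C).
\]
Theorem~\ref{thm:bi-admissible} then yields a lattice homomorphism $H_C\colon C\to B$ with $H_C(x)\in N(f(x))$ for every $x\in C$, which is the required conclusion.

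There is no real obstacle here. The only point needing care is the first step, namely that the lattice operations of the block agree with those of $L$, so that the hypothesis on compatible pairs transfers verbatim to the restricted map. I would also remark that the homomorphisms $H_C$ are chosen independently on each block and need not agree on overlaps; this is consistent with Example~\ref{ex:glue-failure}, so the statement makes no claim about gluing.
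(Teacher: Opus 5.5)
Your proposal is correct and is essentially the paper's own proof: restrict $f$ to a block $C$, which is a Boolean (hence distributive) sublattice of $L$ in which any two elements are compatible, and apply Theorem~\ref{thm:bi-admissible} with the same $N$; your explicit check that the operations of $C$ agree with those of $L$ is a detail the paper leaves implicit.

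One caveat concerns the black box you invoke, not your reduction. The paper's proof of Theorem~\ref{thm:bi-admissible} uses Lemma~\ref{lem:kubis-join-below-meet}, and the join-below-meet orientation of that lemma is false in general. For a counterexample, take $B=\{0,1\}$ and the seven-element distributive lattice $X=\{0,a,b,c,d,e,1\}$ with $a\join b=c$, $a\meet b=0$, $d\meet e=c$ and $d\join e=1$. Let $F$ be the indicator of $\{d,e,1\}$ and $G$ the indicator of $\{c,d,e,1\}$. Then $F$ is a join homomorphism, $G$ is a meet homomorphism and $F\le G$, but any lattice homomorphism $H$ between them would need $H(c)=H(d\meet e)=1$ and $H(c)=H(a\join b)=0$.

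Theorem~\ref{thm:bi-admissible} itself is nevertheless true. On a complete Boolean algebra, bi-admissibility forces $N(z)=[z\meet s_2^\perp,\,z\join s_1]$, where $s_1=\sup N(0)$ and $s_2^\perp=\inf N(1)$. One can then write down directly
\[
H(x)=\bigl(f(x)\meet s_1^\perp\meet s_2^\perp\bigr)\join\bigl(s_1\meet s_2^\perp\bigr),
\]
which is a lattice homomorphism with $H(x)\in N(f(x))$ for all $x$.
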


\begin{proof}
Fix a block $C$.
Since $C$ is a Boolean algebra, it is distributive, and every pair of elements in $C$ is compatible.
The hypotheses therefore restrict to the hypotheses of Theorem~\ref{thm:bi-admissible} for $f|_C\colon C\to B$ and the same neighbourhood system $N$.
Applying that theorem gives the required lattice homomorphism $H_C$.
\end{proof}

\section{A partial converse and the necessity of symmetry}
The selection results obtained so far are one-directional: approximate identities lead to
exact homomorphic selections.
It is natural to ask to what extent the converse holds.
In this section we isolate a minimal additional assumption on the neighbourhood system
under which exact selections force the corresponding approximate identities.

\begin{definition}\label{def:symmetric}
A neighbourhood system $N\colon Y\to\Pow(Y)$ is \emph{symmetric} if $t\in N(u)$ implies $u\in N(t)$.
Congruence-class neighbourhoods are symmetric.
\end{definition}
Symmetry allows one to reverse neighbourhood membership, turning pointwise control of an
exact homomorphism into an approximate identity for the original map.
This yields a partial converse to the selection results obtained earlier.

\begin{proposition}\label{prop:partial-converse}
Let $X,Y$ be lattices and let $N\colon Y\to\Pow(Y)$ be symmetric.

If $N$ is join-admissible and $H\colon X\to Y$ is a join homomorphism such that $H(x)\in N(f(x))$ for all $x\in X$,
then $f$ satisfies the join-approximation condition
\[
f(x)\join f(y)\in N\bigl(f(x\join y)\bigr)\qquad(x,y\in X).
\]
The dual statement holds for meets under meet-admissibility.
If $N$ is bi-admissible and $H$ is a lattice homomorphism with $H(x)\in N(f(x))$, then both approximate conditions hold.
\end{proposition}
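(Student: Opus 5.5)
The plan is to derive the approximate join identity directly from the exactness of $H$, using two applications of join stability \NFourJoin\ and one use of symmetry. I would use no transitivity-type property of $N$, since none is available. The chain of memberships must therefore be set up so that it ends in $N(f(x\join y))$ from the start, rather than passing through $N(H(x\join y))$.

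First, for fixed $x,y\in X$, I record the two facts the argument rests on.
\begin{itemize}
\item By hypothesis $H(x)\in N(f(x))$ and $H(y)\in N(f(y))$, so symmetry gives $f(x)\in N(H(x))$ and $f(y)\in N(H(y))$.
\item Since $H$ is a join homomorphism, the hypothesis at the point $x\join y$ reads $H(x)\join H(y)=H(x\join y)\in N(f(x\join y))$.
\end{itemize}
This second membership is the seed.

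Next I replace $H(x)$ and $H(y)$ by $f(x)$ and $f(y)$ one at a time, with $z=f(x\join y)$ held fixed.
\begin{itemize}
\item Apply \NFourJoin\ with $t=f(x)$, $u=H(x)$, the join partner $H(y)$ and $z=f(x\join y)$. Since $t\in N(u)$ and $u\join H(y)\in N(z)$, this gives $f(x)\join H(y)\in N(f(x\join y))$.
\item Apply \NFourJoin\ again with $t=f(y)$, $u=H(y)$ and join partner $f(x)$, using commutativity of $\join$. Since $f(y)\in N(H(y))$ and $H(y)\join f(x)\in N(f(x\join y))$, this gives $f(y)\join f(x)\in N(f(x\join y))$.
\end{itemize}
The second item is exactly the join-approximation condition.

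The meet statement follows by the identical argument with $\meet$ in place of $\join$ and \NFourMeet\ in place of \NFourJoin. The seed is now $H(x)\meet H(y)=H(x\meet y)\in N(f(x\meet y))$. For the bi-admissible case, a lattice homomorphism is both a join and a meet homomorphism, so both conclusions hold simultaneously.

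The only delicate point is choosing the anchor correctly. A natural first attempt would show $f(x)\join f(y)\in N(H(x\join y))$ and then try to transfer this into $N(f(x\join y))$. That transfer would need a transitivity property that the axioms do not provide. Anchoring at $z=f(x\join y)$ from the outset, via $H(x\join y)\in N(f(x\join y))$, avoids the issue. Symmetry is then needed only to reverse the pointwise memberships at $x$ and $y$.
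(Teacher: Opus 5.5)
Your proposal is correct and matches the paper's proof: seed with $H(x)\join H(y)=H(x\join y)\in N(f(x\join y))$, use symmetry to get $f(x)\in N(H(x))$ and $f(y)\in N(H(y))$, then apply \NFourJoin\ twice to replace $H(x)$ and $H(y)$ by $f(x)$ and $f(y)$. You are slightly more explicit than the paper about the commutativity needed to put the second application of \NFourJoin\ in the required form.
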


\begin{proof}
We prove the join statement; the meet statement is dual.
Fix $x,y\in X$ and set $z=f(x\join y)$.
Since $H$ is a join homomorphism, $H(x)\join H(y)=H(x\join y)$.
By hypothesis, $H(x\join y)\in N(f(x\join y))=N(z)$.

Because $H(x)\in N(f(x))$ and $N$ is symmetric, we also have $f(x)\in N(H(x))$.
Join stability \NFourJoin\ applied with $t=f(x)$ and $u=H(x)$ shows that
$f(x)\join H(y)\in N(z)$.
Similarly, symmetry gives $f(y)\in N(H(y))$, and another application of \NFourJoin\ yields
$f(x)\join f(y)\in N(z)$.
\end{proof}

\begin{example}[Symmetry is genuinely needed]\label{ex:symmetry-needed}
There exist a join-admissible neighbourhood system $N$ that is not symmetric, and maps $f,H\colon X\to Y$ such that
$H$ is a join homomorphism with $H(x)\in N(f(x))$ for all $x$, but $f$ fails the join-approximation condition.
\end{example}

\begin{proof}
Let $Y=\{0<1<2\}$ and define
\[
N(0)=\{0\},\qquad N(1)=\{1\},\qquad N(2)=\{1,2\}.
\]
This $N$ is join-admissible, but it is not symmetric because $1\in N(2)$ while $2\notin N(1)$.

Let $X=\{0,1\}$ be the two-element lattice.
Define $f(0)=2$ and $f(1)=1$.
Define $H\equiv 1$ on $X$; this is a join homomorphism, and $H(0)=1\in N(2)=N(f(0))$, while $H(1)=1\in N(1)=N(f(1))$.
However,
\[
f(0)\join f(1)=2\notin N(1)=N\bigl(f(0\join 1)\bigr),
\]
so the join-approximation condition fails.
The obstruction is exactly that one cannot reverse membership $H(x)\in N(f(x))$ to $f(x)\in N(H(x))$ without symmetry.
\end{proof}

\section{Uniqueness via generators}
Although the neighbourhood framework naturally emphasises existence, uniqueness questions
arise whenever neighbourhoods degenerate to points.
The following definitions and results explain how uniqueness can be recovered from finite
join-generation.

\begin{definition}
A subset $J\subseteq X$ \emph{finite join-generates} a lattice $X$ if every $x\in X$ can be written as a finite join of elements of $J$.
\end{definition}

\begin{proposition}\label{prop:uniq}
Let $X$ be a lattice and $Y$ a lattice.
If $J\subseteq X$ finite join-generates $X$ and two join homomorphisms $H_1,H_2\colon X\to Y$ agree on $J$ (and on $0$ when it is realised as the empty join),
then $H_1=H_2$ on $X$.
\end{proposition}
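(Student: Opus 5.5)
The plan is a direct argument from the definition of finite join-generation, using only that $H_1$ and $H_2$ preserve binary joins. Fix $x\in X$. By hypothesis there are finitely many $j_1,\dots,j_n\in J$ with $x=j_1\join\cdots\join j_n$.

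First I treat the case $n\ge1$. I will show, by induction on $n$, that a join homomorphism $H$ satisfies
\[
H(j_1\join\cdots\join j_n)=H(j_1)\join\cdots\join H(j_n).
\]
The base case $n=1$ is trivial. The inductive step writes $j_1\join\cdots\join j_{n+1}=(j_1\join\cdots\join j_n)\join j_{n+1}$, applies binary join preservation, and then uses the induction hypothesis. Associativity of $\join$ in $X$ and $Y$ makes the bracketing irrelevant. Applying this identity to both $H_1$ and $H_2$ and using $H_1(j_k)=H_2(j_k)$ for each $k$ gives $H_1(x)=H_2(x)$.

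Next comes the degenerate case $n=0$. This can only occur when $X$ has a least element $0$ and $x=0$ is realised as the empty join. In that case the hypothesis explicitly provides $H_1(0)=H_2(0)$. A join homomorphism need not send $0$ to the least element of $Y$, and $Y$ need not even have one, so this extra agreement is exactly what is needed and cannot be dispensed with. Combining the two cases gives $H_1=H_2$ on all of $X$.

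There is no substantial obstacle. The only point requiring care is this empty-join convention, and I would state explicitly that no bottom-preservation of $H_i$ is being assumed.
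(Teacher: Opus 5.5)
Your proof is correct and follows the paper's own argument: write $x=j_1\join\cdots\join j_n$ with $j_k\in J$, push both homomorphisms through the finite join, and use agreement on $J$. Your explicit induction for finite joins and your separate treatment of the empty-join case are details the paper leaves implicit, and both are handled correctly.
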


\begin{proof}
Fix $x\in X$ and choose $x=j_1\join\cdots\join j_n$ with $j_k\in J$.
Then
\[
H_1(x)=H_1(j_1)\join\cdots\join H_1(j_n)=H_2(j_1)\join\cdots\join H_2(j_n)=H_2(x).
\]
\end{proof}

\begin{corollary}\label{cor:unique}
In the setting of Theorem~\ref{thm:bi-admissible}, assume $X$ has a finite join-generating set $J$ such that each neighbourhood $N(f(j))$ is a singleton for $j\in J$
(and for $0$ if needed).
Then the lattice homomorphism $H$ with $H(x)\in N(f(x))$ is unique.
\end{corollary}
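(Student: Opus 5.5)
The plan is to reduce the statement to Proposition~\ref{prop:uniq}, applied to two hypothetical selections. Suppose $H_1,H_2\colon X\to B$ are lattice homomorphisms with $H_i(x)\in N(f(x))$ for every $x\in X$. Existence of at least one such homomorphism is already given by Theorem~\ref{thm:bi-admissible}, so only uniqueness needs proof.

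First, fix $j\in J$. Both $H_1(j)$ and $H_2(j)$ lie in $N(f(j))$. By hypothesis this set is a singleton, and by \NOne\ it contains $f(j)$, so $N(f(j))=\{f(j)\}$. Hence $H_1(j)=f(j)=H_2(j)$, and $H_1$ and $H_2$ agree on $J$.

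Next, consider the empty join. If $X$ has a least element $0$ that is realised only as the empty join of elements of $J$, we use the parenthetical assumption that $N(f(0))$ is a singleton. The same argument then gives $H_1(0)=H_2(0)$.

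Finally, every lattice homomorphism is in particular a join homomorphism. Proposition~\ref{prop:uniq} therefore yields $H_1=H_2$ on all of $X$.

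No step is a real obstacle. The one point needing care is bookkeeping about $0$: if $0$ is itself a finite nonempty join of elements of $J$ (for instance if $0\in J$), no extra hypothesis is needed. Otherwise the clause ``and for $0$ if needed'' supplies the missing agreement. I would state this case distinction explicitly so that the appeal to Proposition~\ref{prop:uniq} matches its hypothesis on the empty join exactly.
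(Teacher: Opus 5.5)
Your proposal is correct and follows the paper's argument: the singleton hypothesis forces any two admissible selections to agree on $J$ (and on $0$ where needed), and Proposition~\ref{prop:uniq} then gives $H_1=H_2$. Your explicit case distinction for the empty join is a more careful spelling-out of the paper's parenthetical ``and for $0$ if needed''.
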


\begin{proof}
Any admissible selection must take the unique value in $N(f(j))$ at each $j\in J$.
Thus any two such homomorphisms agree on $J$, and Proposition~\ref{prop:uniq} gives uniqueness.
\end{proof}

\begin{remark}
Suppose that $X$ is a finite distributive lattice.
Let $\mathrm{Ji}(X)$ denote the set of join-irreducible elements of $X$, i.e.\ the set of all $j\ne 0$ such that
$j=x\join y$ implies $j=x$ or $j=y$.
Then $\mathrm{Ji}(X)$ finite join-generates $X$ (indeed, every element of $X$ is the join of the join-irreducibles below it).
Consequently, Corollary~\ref{cor:unique} applies with $J=\mathrm{Ji}(X)$ (and $0$ viewed as the empty join):
if each $N(f(j))$ is a singleton for $j\in \mathrm{Ji}(X)$ (and for $0$ if needed), then the admissible lattice homomorphism is unique.
\end{remark}

\section{Approximate states on orthomodular lattices and Kalton--Roberts}

\subsection{Blockwise approximation}
We now turn to applications of the blockwise framework to additive functions on
orthomodular lattices.
In this setting, the natural notion of additivity is not global, but restricted to
orthogonal (hence compatible) pairs, reflecting the underlying block structure.

\begin{definition}
Let $L$ be an orthomodular lattice.
A map $\mu\colon L\to\mathbb{R}$ is \emph{finitely additive on orthogonal pairs} if $\mu(x\join y)=\mu(x)+\mu(y)$ whenever $x\perp y$.
If additionally $\mu(0)=0$, $\mu(1)=1$, and $\mu\ge 0$, then $\mu$ is a (finitely additive) \emph{state}.
\end{definition}
On a Boolean block, orthogonality is simply disjointness, so the restriction of $\mu$
to any block is a nearly additive set function in the sense of Kalton and Roberts.
This observation underlies the blockwise approximation result proved below.

\begin{theorem}[Kalton--Roberts]\label{thm:KR}
Let $B$ be a Boolean algebra and let $f\colon B\to\mathbb{R}$ satisfy
\[
\bigl|f(x\join y)-f(x)-f(y)\bigr|\le \varepsilon\qquad(x\meet y=0).
\]
Then there exists a finitely additive signed measure $\nu$ on $B$ such that 
\[
    \sup_{b\in B}|f(b)-\nu(b)|\le K\,\varepsilon,
\]
where $K$ is a universal constant.
\end{theorem}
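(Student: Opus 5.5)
This is the classical Kalton--Roberts theorem, and I would prove it along their original route: first a finite case with a uniform constant, then a compactness passage to arbitrary $B$.

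\textbf{Reduction to finite algebras.} Fix a finite subalgebra $B_0\subseteq B$, with atoms $\omega_1,\dots,\omega_n$. The aim is to produce an additive $\nu_0$ on $B_0$ with $|f-\nu_0|\le K\varepsilon$ on $B_0$, where $K$ does not depend on $n$. Once this is done, I use compactness. For $b\in B$ the values $\nu_0(b)$ lie in $[f(b)-K\varepsilon,f(b)+K\varepsilon]$. The finite subalgebras form a directed set. Take an ultrafilter refining the order filter and let $\nu$ be the pointwise limit of the net $\nu_{B_0}$; the limit exists by Tychonoff. Additivity on disjoint pairs passes to the limit, because each pair lies in all sufficiently large $B_0$. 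This gives $\nu$ with the uniform bound.

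\textbf{Finite case.} Identify $B_0$ with $\mathcal P(\{1,\dots,n\})$. The natural candidate is $\nu_0(A)=\sum_{i\in A}f(\{i\})$, but naively $|f(A)-\nu_0(A)|$ is only $O(\varepsilon\log|A|)$, as one sees by splitting $A$ dyadically. To get a uniform constant I would follow Kalton--Roberts. Define the defect of $f$ on disjoint pairs and reduce the question to a combinatorial statement: there is a constant $K$ such that any function $g$ on $\mathcal P(n)$ with $|g(A\cup C)-g(A)-g(C)|\le1$ for disjoint $A,C$ and $g(\{i\})=0$ satisfies $|g(A)-\lambda(A)|\le K$ for some additive $\lambda$. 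By a duality / Hahn--Banach argument in the finite-dimensional space $\mathbb{R}^{\mathcal P(n)}$, this statement is equivalent to a covering inequality. Namely, for each $m$ and subset $A$, the set $A$ is covered by a controlled-weight family of ``good'' splittings. Kalton and Roberts establish that covering inequality via expander graphs: bipartite graphs with $|{\rm neighbourhood}(S)|\ge|S|$ for all $|S|\le$ half, constant degree, and vertex count proportional to $m$. These yield a bounded-weight representation of $1_A$ as a combination of disjoint-union defects.

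\textbf{Main obstacle.} The key step is this uniform combinatorial lemma. The expander construction, together with its conversion via LP duality into the $O(1)$ bound, is the genuinely deep part and yields $K\le 45$ in the original paper. I would not reprove it but cite \cite{KR}, noting the sharper constants of \cite{BPR} and the lower bounds of \cite{GGK}. Everything else, namely the reduction to finite subalgebras and the ultrafilter limit, is routine.
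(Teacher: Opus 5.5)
Your proposal is correct and agrees with the paper in substance. The paper's proof simply applies the $\varepsilon=1$ case of \cite{KR} (or \cite{BPR}) to $f/\varepsilon$ and scales back, with $\varepsilon=0$ trivial since then $f$ is itself additive. You likewise defer the deep combinatorial core to \cite{KR}, and your normalisation to defect $1$ does the same job as the rescaling. Your finite-subalgebra and ultrafilter reduction is sound but redundant, since the cited theorem already covers arbitrary $B$; the only detail to state is that $\nu_{B_0}(b)$ is defined only once $b\in B_0$, which is harmless because those $B_0$ form a member of the ultrafilter.
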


\begin{proof}
Apply the case $\varepsilon=1$ in \cite{KR} (or the refinement \cite{BPR}) to the rescaled function $f/\varepsilon$, and scale back.
\end{proof}

\begin{remark}
The optimal value of $K$ is unknown. One may take $K=44.5$ by \cite{KR,BKP}, and $K=38.5$ by \cite{BPR}. A non-trivial lower bound is provided by Gnacik, Guzik, and the author \cite{GGK}, who show that $K\ge 3$ already for non-negative $1$-additive functions.
\end{remark}

\begin{theorem}[Blockwise approximation of orthomodular states]\label{thm:blockwise-states}
Let $L$ be an orthomodular lattice and let $\mu\colon L\to\mathbb{R}$ satisfy
\[
\bigl|\mu(x\join y)-\mu(x)-\mu(y)\bigr|\le \varepsilon\qquad(x\perp y).
\]
Then for every Boolean block $C\subseteq L$ there exists a finitely additive map $\nu_C\colon C\to\mathbb{R}$ such that
\[
\sup_{c\in C}|\mu(c)-\nu_C(c)|\le K\,\varepsilon.
\]
\end{theorem}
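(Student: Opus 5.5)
The plan is to reduce the statement to the Boolean Kalton--Roberts theorem (Theorem~\ref{thm:KR}) applied block by block, exactly as the earlier blockwise results were reduced to the distributive case.

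Fix a Boolean block $C\subseteq L$ and consider the restriction $f:=\mu|_C\colon C\to\mathbb{R}$.

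The first step is to check that $f$ satisfies the hypothesis of Theorem~\ref{thm:KR} on the Boolean algebra $C$. Since $C$ is a Boolean subalgebra of $L$, it is a sub-ortholattice. Joins, meets and orthocomplements computed in $C$ therefore agree with those in $L$. Let $x,y\in C$ with $x\meet y=0$. In a Boolean algebra this means $x\le y^\perp$, using the complement in $C$, which is the orthocomplement in $L$. Hence $x\perp y$ in $L$, and the hypothesis of the statement gives
\[
\bigl|f(x\join y)-f(x)-f(y)\bigr|=\bigl|\mu(x\join y)-\mu(x)-\mu(y)\bigr|\le\varepsilon .
\]
The one point needing care is that a block is closed under $\perp$. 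This holds because a maximal Boolean subalgebra of an orthomodular lattice is, by definition here, a Boolean subalgebra whose complementation is the restriction of $x\mapsto x^\perp$. So disjointness in $C$ and orthogonality in $L$ coincide.

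The second step is to apply Theorem~\ref{thm:KR} to $f$ on $B=C$. This yields a finitely additive signed measure $\nu_C\colon C\to\mathbb{R}$ with $\sup_{c\in C}|\mu(c)-\nu_C(c)|\le K\varepsilon$, where $K$ is universal. Since $K$ does not depend on $C$, the bound is uniform across blocks. Finite additivity of $\nu_C$ on $C$ means $\nu_C(x\join y)=\nu_C(x)+\nu_C(y)$ for disjoint $x,y\in C$, which by the first step is the same as additivity on orthogonal pairs within $C$.

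There is no real obstacle. The only thing to state explicitly is the identification of disjointness in the block with orthogonality in $L$. I would also remark that no compatibility between the measures $\nu_C$ for different blocks is claimed; that is a gluing issue of the kind discussed in Section~6.
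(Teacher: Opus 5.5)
Your proposal is correct and is essentially the paper's own argument: restrict $\mu$ to the block $C$, note that disjointness in $C$ is the same as orthogonality in $L$, and apply the Kalton--Roberts theorem to $\mu|_C$. You also make explicit that $C$ must be closed under $\perp$, not merely a Boolean sublattice, which is a point the paper's one-line proof leaves implicit.
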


\begin{proof}
Fix a block $C$.
In $C$ orthogonality is simply disjointness ($x\meet y=0$), so the hypothesis restricts to the Kalton--Roberts hypothesis on $C$.
Apply Theorem~\ref{thm:KR} to $\mu|_C$.
\end{proof}

\begin{remark}
The content of the theorem is not in the proof, which is a direct restriction argument,
but in the identification of orthogonal additivity as the correct notion allowing
Kalton--Roberts stability to be transferred from Boolean algebras to orthomodular lattices.
\end{remark}

\begin{remark}[Orthogonality implies a common block]
If $x\perp y$ in an orthomodular lattice, then $x\le y^\perp$.
The elements $x,y,y^\perp$ generate a Boolean subalgebra, which is contained in some block; see \cite[Chapter~2]{Kalmbach}.
This is the basic reason why orthogonal additivity is naturally a blockwise (hence Boolean) condition.
\end{remark}

\subsection{A finite-dimensional projection example}

\begin{example}\label{ex:proj}
Let $H=\mathbb{C}^n$ and let $P(H)$ be the projection lattice, an orthomodular lattice under range inclusion with $p^\perp=I-p$.
Fix a density matrix $\rho$ and let $\mu_0(p)=\mathrm{tr}(\rho p)$.
If $\eta\colon P(H)\to\mathbb{R}$ satisfies $\|\eta\|_\infty\le 1$ and $\mu(p)=\mu_0(p)+\varepsilon\,\eta(p)$,
then for orthogonal projections $p\perp q$ one has
\[
|\mu(p\join q)-\mu(p)-\mu(q)|=\varepsilon\,|\eta(p\join q)-\eta(p)-\eta(q)|\le 3\varepsilon.
\]
Hence Theorem~\ref{thm:blockwise-states} yields, on each block of commuting projections, a finitely additive approximation within $3K\varepsilon$ in sup norm.
In the block determined by an orthonormal basis $(e_i)_{i=1}^n$, the projections are precisely those onto coordinate subspaces,
\[
p_A=\sum_{i\in A} e_i\otimes e_i^*,\qquad A\subseteq\{1,\dots,n\}.
\]
\end{example}

\section{Exact quotient gluing for congruence neighbourhoods}
The rigid-core condition guarantees exact agreement on overlaps, but it is not the only way
to enforce gluing.
When neighbourhoods arise from lattice congruences, it is natural to pass to the quotient,
where compatibility on overlaps becomes automatic.

\begin{proposition}[Quotient gluing]\label{prop:quotient-glue}
Let $L$ be an orthomodular lattice and $Y$ a lattice.
Let $\Theta$ be a lattice congruence on $Y$ and let $N(y)=[y]_\Theta$.
Suppose that for each block $C\subseteq L$ we have a map $F_C\colon C\to Y$ such that $F_C(x)\in [f(x)]_\Theta$ for all $x\in C$.
Then the compositions $\overline{F}_C\colon C\to Y/\Theta$ agree on overlaps and therefore glue to a global map $\overline{F}\colon L\to Y/\Theta$.
Moreover, $\overline{F}(x)=[f(x)]_\Theta$ for all $x\in L$.
\end{proposition}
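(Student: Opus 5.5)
The plan is to reduce the statement to the fact that $F_C(x)\in[f(x)]_\Theta$ pins down the class of $F_C(x)$ in terms of $f$ alone. Let $\pi\colon Y\to Y/\Theta$ be the canonical quotient map $y\mapsto[y]_\Theta$, and set $\overline{F}_C:=\pi\circ F_C$. The first step is the observation that, since $\Theta$ is an equivalence relation, $F_C(x)\in[f(x)]_\Theta$ means exactly $F_C(x)\,\Theta\, f(x)$, i.e.\ $[F_C(x)]_\Theta=[f(x)]_\Theta$. Hence $\overline{F}_C(x)=[f(x)]_\Theta$ for every $x\in C$, and this value does not depend on the block $C$.

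The second step is agreement on overlaps. If $x\in C\cap D$ for blocks $C,D$, the first step gives $\overline{F}_C(x)=[f(x)]_\Theta=\overline{F}_D(x)$. This is precisely the overlap hypothesis of Proposition~\ref{prop:gluing}, applied with the set $Y/\Theta$ in place of $Y$. That proposition therefore yields a unique global map $\overline{F}\colon L\to Y/\Theta$ with $\overline{F}|_C=\overline{F}_C$ for every block $C$.

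The third step is the formula $\overline{F}(x)=[f(x)]_\Theta$. Every $x\in L$ lies in some block: $\{0,x,x^\perp,1\}$ is a Boolean subalgebra, and by Zorn's lemma it extends to a maximal one (cf.\ \cite{Kalmbach}). Choosing such a block $C\ni x$, we get $\overline{F}(x)=\overline{F}_C(x)=[f(x)]_\Theta$.

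There is no real obstacle here; the only points to state explicitly are that the blocks cover $L$ and that the congruence property of $\Theta$ is not even needed for the gluing itself, since only its being an equivalence relation is used. I would add a remark that the congruence property matters for the homomorphism content. Because $\pi$ is a lattice homomorphism, if each $F_C$ is a join (respectively meet, lattice) homomorphism, then so is each $\overline{F}_C$. By the last part of Proposition~\ref{prop:gluing}, $\overline{F}$ then preserves the corresponding operations on compatible pairs.
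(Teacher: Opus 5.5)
Your proof is correct and follows the paper's argument. You observe that $F_C(x)\,\Theta\,f(x)$ forces $\overline{F}_C(x)=[f(x)]_\Theta$ regardless of the block, apply Proposition~\ref{prop:gluing} with values in $Y/\Theta$, and read off the formula; your extra remarks (that blocks cover $L$ by Zorn's lemma, that gluing uses only the equivalence-relation property of $\Theta$, and that the congruence property transfers homomorphism properties through $\pi$) are accurate details the paper leaves implicit.
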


\begin{proof}
If $x\in C\cap D$, then $F_C(x)\equiv f(x)\equiv F_D(x)\ (\Theta)$, so $\overline{F}_C(x)=\overline{F}_D(x)$ in $Y/\Theta$.
Thus the quotient-valued block maps agree on every overlap, and Proposition~\ref{prop:gluing} gives the global quotient map.
The last statement is immediate from $\overline{F}(x)=[F_C(x)]_\Theta=[f(x)]_\Theta$.
\end{proof}

\section{Open problems}

\begin{problem}\label{prob:quant-glue}
Example~\ref{ex:glue-failure} shows that exact gluing can fail when block overlaps are non-trivial.
Suppose, however, that one has \emph{quantitative} overlap control: for some metric (or lattice seminorm) on $Y$,
the blockwise selections satisfy $d(F_C(x),F_D(x))\le \delta$ for all $x\in C\cap D$.
Under what additional hypotheses can one construct a global map whose defect is controlled in terms of $\delta$?
\end{problem}

\begin{problem}\label{prob:MV}
Orthomodular lattices have Boolean blocks, and the same is true for orthoalgebras.
The question below is therefore not meant to concern orthoalgebras.
It concerns only those many-valued or lattice-ordered generalisations in which MV-algebraic contexts are genuinely present.
In particular, one should not expect such a formulation for arbitrary effect algebras: if an effect algebra is not a lattice, it need not be a union of maximal MV-subalgebras.

For classes in which MV-blocks or MV-contexts are part of the structure, is there an MV-algebraic analogue of the separation and neighbourhood-selection machinery
developed in this paper, yielding stability results for approximately additive maps on
MV-blocks and a suitable blockwise gluing theory?
At present we do not know a general answer.

It would also be interesting to understand $\sigma$-additive refinements in settings where
the relevant blocks are $\sigma$-complete Boolean algebras (for instance, in projection
lattices of separable Hilbert spaces), and to what extent $\sigma$-additive
Kalton--Roberts-type stability can be proved blockwise and then glued.
\end{problem}

\section*{Acknowledgments}
The author thanks the referee for a careful reading of the manuscript and for constructive comments.
These comments led, in particular, to a simplification of Proposition~\ref{prop:OML-counterexample}, a shorter treatment of the dual separation theorem, and clearer formulations of the neighbourhood system and infinitary distributivity assumptions.
Support from NCN Sonata-Bis 13, grant no. 2023/50/E/ST1/00067 is acknowledged with thanks.

\section*{Conflict of Interest and Data Availability}

\paragraph{Conflict of Interest.}
The author declares that he has no conflict of interest.

\paragraph{Data Availability.}
No datasets were generated or analysed during the current study.


\end{document}